\newtheorem{theorem}{Theorem}[section]
\newtheorem{lemma}[theorem]{Lemma}
\newtheorem{corollary}[theorem]{Corollary}
\newtheorem{proposition}[theorem]{Proposition}
\theoremstyle{remark}
\theoremstyle{definition}
\newtheorem{example}[theorem]{Example}
\numberwithin{equation}{section}
\DeclareMathOperator{\bC}{{\mathbb C}}
\DeclareMathOperator{\bN}{{\mathbb N}}
\DeclareMathOperator{\Cdb}{{\mathbb C}}
\DeclareMathOperator{\Fdb}{{\mathbb F}}
\newcommand*{\tran}{\mathsf{T}}
\DeclareMathOperator{\minten}{\otimes_{\rm min}}
\DeclareMathOperator{\cM}{{\mathcal M}}
\DeclareMathOperator{\cL}{{\mathcal L}}
\title{Commutativity of operator algebras}
\author{David P. Blecher}
\address{Department of Mathematics, University of Houston, Houston, TX 77204-3008.}
\email{dpbleche@central.uh.edu} 
\date{November 17, 2025.  To appear Expositiones Mathematicae}
\begin{document} 

\subjclass[2020]{Primary  46K50, 46L07,  	47L25, 47L30; Secondary: 46J40, 47L45, 47L75}
\keywords{Completely bounded maps, operator algebras, injective envelopes, commutative algebras, matrix algebra} 

\begin{abstract} 
We call an operator algebra $A$ {\em reversible} if $A$ with reversed multiplication  is also an abstract operator algebra (in the modern operator space sense).
This class of operator algebras is intimately related  to  the {\em symmetric operator algebras}:
the subalgebras of $B(H)$ on which the transpose map is a  complete isometry.  In previous work we 
studied the unital case, where reversibility is equivalent to commutativity.  We give many sufficient conditions
under which a nonunital reversible or symmetric  operator algebra is commutative.
We also give many complementary results of independent interest, and solve a few open questions from previous papers.
Not every reversible or symmetric operator algebra is commutative, however we show that they all are 3-commutative.  That is, order does not matter in  the product of three or more  elements from $A$.  The proof of this relies on a  technical analysis involving the injective envelope.  Indeed nonunital algebras are often enormously more complicated than unital ones 
in regard to the topics we consider. On the positive side,  our considerations raise 
very many questions even for low dimensional matrix algebras, some of which are of a computational nature and might be suitable for undergraduate research.  The canonical anticommutation relations from mathematical physics play a significant role.
  \end{abstract}

\maketitle

\begin{center} {\em Dedicated with affection to David Larson, who while usually nonselfadjoint, was invariably positive} \end{center}

\section{Introduction} 

If $A$ is a matrix algebra (isomorphic to an 
algebra of finite matrices), then $A$ with reversed multiplication is also 
(representable as) a matrix algebra.
This is because such matrix algebras are exactly the finite dimensional algebras.   (Indeed any 
finite dimensional algebra $A$ may be viewed as a subalgebra of $\cL(V) \cong M_n$ where $V = A^1$, the unitization, and $n = {\rm dim}(A^1)$.)   
The situation is startlingly different for operator algebras in the modern operator space sense.
We define a concrete  operator algebra to be an 
algebra $A$ of bounded operators on a Hilbert space $H$,
together with the canonical matrix norms (the norm on $M_n(A)$ 
inherited from $M_n(B(H)) \cong B(H^{(n)})$).  An {\em abstract operator algebra}  is 
a Banach algebra $A$ with a norm on $M_n(A)$ for each $n \in \bN$, which is   isomorphic to a concrete  operator algebra via 
a completely isometric (this is defined below) surjective homomorphism.    We say that an abstract operator algebra  is {\em reversible} if $A$ with reversed multiplication (but with same matrix norms) is also an abstract operator algebra in this sense.  The reversible operator algebras  are intimately related  to  the {\em symmetric operator algebras}:\ 
namely the subalgebras of $B(H)$ on which the transpose map is a  complete isometry. Every symmetric operator algebra is reversible.  Conversely to every reversible operator algebra we can associate a canonical symmetric operator algebra $B$ which it is isometrically isomorphic to.  Of course $B$ is commutative if and only if $A$ is.
We proved in 
\cite{BComm} that a unital operator algebra is reversible if and only if it is commutative.  This is also true for $C^*$-algebras,
and more generally for operator algebras with a contractive approximate identity (see e.g.\ 2.2.8 in 
 \cite{BLM}).  However it turns out that it is not quite true for all 
nonunital operator algebras.  In this paper we give several sufficient conditions for which it is true. 
This solves questions posed in \cite{BComm}.  We also solve a few other open questions from previous papers.    We also show that every reversible operator algebra is 3-commuting: the order of the product of three or more  elements from $A$ does not matter.  Although the above statements are easy to understand, our proofs are often rather technical, e.g.\ often relying on   technical analysis involving the injective envelope  and TRO (ternary rings of operators) methods, 
 and the Kaneda-Paulsen characterization 
of nonunital operator algebras in terms of the injective envelope.   

We are not aware of any work on 3-commutativity in the literature.    Similarly, since there seems to be no neat completely general criterion for reversibility to imply commutativity, 
our considerations raise 
very many questions even for low dimensional matrix algebras.   Many of these would be suitable for undergraduate research projects, for example involving computations with  $3 \times 3$ and $4 \times 4$ matrices, or structure theory of finite dimensional algebras.   Reversible (and more generally, 3-commuting) operator algebras turn out to be   triangularizable operator algebras in the sense of  e.g.\ Radjavi and Rosenthal \cite{RR} or \cite{KS}.
Thus there are good links to the theory of triangular operator algebras, and to work of Larson \cite{L} and others on similarity, which raise interesting questions.  For example,  one can try to classify subclasses of these in low dimensions using Burnside's theorem and results such as (the nonunital variant of) Theorem 1.5.1 of \cite{RR}.  

 Turning to the structure of our paper, in Section
        \ref{prel} we discuss a few preliminaries and definitions.  In Section
        \ref{rsy} we prove a couple of  
        sufficient conditions for reversibility to imply commutativity.  Similarly we discuss symmetric operator algebras in the spirit of \cite[Section 2]{BWinv}.
        We also take the opportunity to correct a couple of misstatements from \cite{BWinv}. In Section
        \ref{sp} we show that operator algebras may be represented in a  certain `standard position' where the TRO structure of the injective envelope is compatible with the algebra structure.  We use this to associate a canonical commutative operator algebra with any reversible operator algebra $A$, and show that $A$ is 3-commutative in the sense of the abstract.  The main theorem here is used as a tool to prove commutativity in other sections of our paper.  In Section
        \ref{kegs} we give some examples based on the canonical anticommutation relations (CAR) from mathematical physics.  
        These supply counterexamples to open questions in \cite{BComm} and other papers of ours.  In Section
        \ref{sess} we study an interesting class of operator algebras,
        those which are subalgebras of their injective envelope when the latter is a $C^*$-algebra.  This turns out to coincide with the class of operator algebras for which the unitization $A^1$  of $A$  is an essential operator space extension of $A$, or equivalently that $I(A) = I(A^1)$.  We give a low dimensional example of 
        an algebra $A$ for which  $A^1$ is  a rigid  but not essential operator space extension of $A$. 
        This example is also a symmetric noncommutative operator algebra. 
        In the short Section 
    \ref{joa} we consider Jordan operator algebras, answering  several open questions. 
        Finally in Section
        \ref{last} we give a few more  
        sufficient conditions under which a nonunital reversible or symmetric  operator algebra is commutative.  
        We also show that a  finite dimensional 
     reversible  operator  algebra is the algebra direct sum of two ideals $C$ and $K$ where $C$ is a unital commutative operator  algebra (or $(0)$), and $K$ is a  nilpotent ideal and is also reversible, 
     and $CK = KC = (0)$.  This reduces the study of finite dimensional 
     reversible  operator  algebras to the case of nilpotent algebras.  Indeed for matrix algebras,
     to the case of strictly upper triangular matrix algebras.
     
 \section{Preliminaries} \label{prel} 
 
 We write $M_n$ (resp.\ $M_{m,n}$) for the  $n \times n$ (resp.\ $m \times n$) matrices.  
 The letters $H, K$ are usually reserved for Hilbert spaces.  
 For subsets $X, Y$ of an algebra $A$ we write $XY$ for the span of products $xy$ for $x \in X, y \in Y$.  The reader will need to be familiar with the  basics of  operator spaces and operator algebras,  
as may be found in  e.g.\ \cite{BLM,Pau,P,Dav}.  We refer to these 
 for more information on the topics below  (particularly \cite{BLM}, since it covers all the topics).  
 Nearly all of our proofs  
 seem to all work in both the real and complex case, and we shall 
 usually say nothing about the underlying field after early sections (readers needing more detail on the real case of our preliminaries are referred e.g.\ to \cite{BReal}). 
A  normed algebra $A$  is {\em unital} if it has an identity $1$ of norm $1$, 
and a map $T$ 
is unital if $T(1) = 1$.  
A {\em concrete complex (resp.\ real) operator space} $X$ is a complex (resp.\ real)  subspace of $B(H)$ for $H$ a complex (resp.\ real) Hilbert space. For $n \in \bN$ we have the identification $M_n(B(H)) \cong B(H^{(n)})$ where $H^{(n)}$ is the $n$-fold direct sum of $H$. From this identification, $M_n(X)$ inherits a norm. 
If $T : X \to Y$ we write $T^{(n)}$ for the canonical `entrywise' amplification taking $M_n(X)$ to $M_n(Y)$.   
The completely bounded norm is $\| T \|_{\rm cb} = \sup_n \, \| T^{(n)} \|$, and $T$ is 
completely  contractive (resp.\
completely isometric) if  $\| T \|_{\rm cb}  \leq 1$ (resp.\ $T^{(n)}$ is isometric for each $n$). 

        We will use very often Hamana's theory of the injective envelope $I(X)$ of an operator space, 
        which may be found e.g.\ in \cite[Chapter 4]{BLM} or \cite{Ha}, or in the several other basic texts on operator spaces. 
        The real case may be found in e.g.\ \cite{BReal,BCK}. 
        We will also use the theory of TRO's (ternary rings of operators) which may be found in e.g.\ \cite[Chapters 4 and 8]{BLM}.  A subTRO $Z$ of a $C^*$-algebra $A$, or 
        of a TRO in $A$, is a linear subspace with $Z Z^* Z \subseteq Z$. 
        The
        injective envelope $Z = I(X)$ like every injective operator  space has a unique structure as a TRO.  Thus we may write unambiguously 
        $x y^* z$ for $x, y, z \in Z$. 
        For some of the computations in our paper, or in regard to undergraduate research, we remark that for low dimensional examples it is often easy to compute the injective envelope $Z$ of 
        a subspace $A$ of $M_n$, which coincides with Hamana's ternary envelope (see e.g.\
        \cite[Chapters 4 and 8]{BLM})  in this case.   
        Indeed $Z$ is finite dimensional, and  
        (in very low dimensional examples) will         often  be the 
        TRO $W$ in $M_n$ generated by $A$.  That is, $W$ is the span of products $a_1 a_2^* a_3 a_4^* \cdots 
                a_m$ for $a_k \in A.$ 
        Alternatively, $W = B A$ where 
        $B$ is the $*$-algebra generated by 
        products $a_1 a_2^*$ for $a_k \in A.$
        Even in cases where $Z \neq W$ one may often work in $W$, or one may obtain $Z$ from $W$ as a quotient, by deleting blocks in the `block rectangular matrix form' of $W$.  We explain this a little further (see also the Notes to Section 8.3 in \cite{BLM}).  Recall the well known fact 
        that any finite dimensional complex TRO $W$ 
        is unitarily equivalent to $\oplus_{k = 1}^r \, M_{n_k,m_k}$ for some $m_k,n_k \in \bN$.  
        (This may be deduced by applying the matching fact about finite dimensional $C^*$-algebras to the `linking
        $C^*$-algebra of $W$.) 
        The $M_{n_k,m_k}$ are the `blocks' referred to above.
        Some of these blocks are needed in order to contain $A$ completely isometrically.
        One obtains $Z$ from $W$ by deleting the blocks not needed to `support'  
        $A$ in this way.
        If $W$ is `simple' (that is, $W W^*$ has no nontrivial ideals, or equivalently no nontrivial  central projections), then $r = 1$ here.  Thus in the         discussion above if $W$ is simple then $W = Z = I(A)$.  For example, the injective envelope of the strictly upper triangular matrices in $M_n$ is easily seen by the above considerations to be the copy of $M_{n-1}$ in the upper right  corner of $M_n$.

        By Meyers theorem any operator algebra $A$ has a unitization $A^1$, which is unique up to 
        completely isometric isomorphism.  If $A$ has the trivial (zero) product the operator  space structure
        on $A^1$ is the one given by 
        $$\begin{bmatrix} \lambda I_H & x \\ 0 & \lambda I_H \end{bmatrix} \in M_2(B(H)) = B(H^{(2)})$$ for scalars $
       \lambda$ and $x \in A$, if $A \subseteq B(H)$ as an operator space.  One may compute the norm of this (on $A^1 = M_1(A^1)$) explicitly, indeed it is the norm of $$\begin{bmatrix} |\lambda | & \| x \| \\ 0 & | \lambda |\end{bmatrix},$$ 
       and this number has an explicit formula \cite[Lemma 2.3]{BNm2}. 
   However for general nontrivial   operator algebras there is no such convenient formula.    Indeed the 
   operator  space structure
        on $A^1$ can be slippery (see e.g.\ Corollary \ref{uni}).
   
   Unital operator algebras were characterized abstractly in \cite{BRS}.   We recall the Kaneda-Paulsen characterization of nonunital operator algebras:
        
        \begin{theorem} \label{KP} Operator algebra products on an operator space $X$ are in bijective correspondence 
        with elements $z \in {\rm Ball}(I(X))$ for which $X z^* X \subseteq X$.  For such $z$ the associated operator algebra product on $X$ is $x z^* y$. \end{theorem}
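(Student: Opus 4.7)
Given $z \in \mathrm{Ball}(I(X))$ with $X z^* X \subseteq X$, I would take any $C^*$-algebra $C$ containing $I(X)$ as a subTRO and define $x\cdot y := xz^*y$ computed in $C$; this lies in $X$ by hypothesis. Associativity is the $C^*$-identity $(xz^*y)z^*w = xz^*(yz^*w)$. For complete contractivity, the $M_n$-version of the product factors in $M_n(C)$ as
$$[x_{ij}] \cdot [y_{ij}] \;=\; [x_{ij}]\,(I_n \otimes z^*)\,[y_{ij}],$$
so its norm is bounded by $\|[x_{ij}]\|\,\|z\|\,\|[y_{ij}]\| \leq \|[x_{ij}]\|\,\|[y_{ij}]\|$. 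Hence $X$ with this product is a concrete, and therefore abstract, operator algebra.

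\textbf{Hard direction.} Given an abstract operator algebra product on $X$, I would first realize it concretely in some $B(H)$, so that $X \subseteq B(H)$ completely isometrically and $xy = x \cdot I_H \cdot y$ in $B(H)$. Next embed $X$ completely isometrically into $I(X)$. By injectivity the inclusion $X \hookrightarrow I(X)$ extends to a complete contraction $\Phi : B(H) \to I(X)$ fixing $X$, and I would set $z := \Phi(I_H) \in \mathrm{Ball}(I(X))$. The key remaining claim is the ternary identity $\Phi(xay) = x\,\Phi(a)^*\,y$ for $x,y \in X$ and $a \in B(H)$, which specialised to $a = I_H$ yields $xy = \Phi(xy) = xz^*y$, and in particular $X z^* X \subseteq X$. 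This claim is extracted from the rigidity of the injective envelope combined with the essentiality of the $X$-bimodule action on the triple envelope of $X$ inside $I(X)$: the two completely contractive maps $a \mapsto \Phi(xay)$ and $a \mapsto x\Phi(a)^*y$ agree on the Shilov/triple envelope of $X$ by a bimodule rigidity argument, and by injective extension agree on all of $B(H)$.

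\textbf{Uniqueness and main obstacle.} For injectivity of the correspondence, if $z_1, z_2 \in I(X)$ induce the same product, then $w := z_1 - z_2$ satisfies $X w^* X = 0$ in any $C^*$-algebra hull of $I(X)$; essentiality of $I(X)$ as a TRO extension of $X$ forces $w = 0$, since an element of $I(X)$ annihilated on both sides by the generating subspace $X$ in the TRO sense must vanish. The principal obstacle is precisely the ternary identity $\Phi(xay) = x\Phi(a)^*y$ in the existence step: injectivity of $I(X)$ only gives a complete contraction $\Phi$, and it does not by itself produce the bimodule/TRO-morphism behaviour needed to convert $\Phi(I_H)$ into a quasi-multiplier element $z$ implementing the original product. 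Upgrading injectivity to this ternary linearity is the technical heart of the Kaneda--Paulsen theorem, and is where the rigidity and TRO structure of $I(X)$ must be used in an essential and non-trivial way.
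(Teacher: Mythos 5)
Your proposal is not following the paper's route (the paper disposes of this theorem by citation: the full correspondence is quoted from \cite[Theorem 5.2]{BNm2}, and the ``easy'' direction from Remark 2 on p.~194 of \cite{BRS}), and as a free-standing argument it has genuine gaps. The most serious one is exactly where you locate it, in the hard direction: the asserted identity $\Phi(xay)=x\,\Phi(a)^*\,y$ cannot be correct as stated, since the left side is linear in $a$ while the right side is conjugate-linear in $a$ (the TRO triple product of $I(X)$ is conjugate-linear in the middle variable); and the justification offered does not exist as a principle. Rigidity of $I(X)$ controls complete contractions $I(X)\to I(X)$ that fix $X$; it says nothing about two complete contractions $B(H)\to I(X)$ that happen to agree on a subspace, and ``agree on the triple envelope, hence by injective extension agree on all of $B(H)$'' is false in general (injectivity gives existence of extensions, never uniqueness, and $I_H$ does not lie in the triple envelope of $X$). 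Moreover it is not established -- and this is essentially the whole content of the Kaneda--Paulsen theorem -- that an \emph{arbitrary} completely contractive extension $\Phi$ of the embedding $X\hookrightarrow I(X)$ sends $I_H$ to an element implementing the product. The known proofs do not proceed this way: they realize $I(X)$ as the $1$-$2$ corner of the injective envelope of the Paulsen system and exploit the Choi--Effros/Youngson structure of a \emph{minimal} $X$-projection (or argue as in \cite{BNm2}), and it is precisely there that the ternary/bimodule behaviour you need is extracted. So the existence half of the correspondence is not proved by your sketch.

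Two smaller points. In the ``easy'' direction, associativity plus complete contractivity of the bilinear map $(x,y)\mapsto xz^*y$ does not yet make $X$ a \emph{concrete} operator algebra: with this twisted product $X$ is not a subalgebra of $C$, so your final ``hence concrete, therefore abstract'' is a non sequitur unless you invoke the nonunital Blecher--Ruan--Sinclair theorem, or (better, and this is the paper's route via Theorem \ref{BRSKP} and Lemma \ref{both}) write down the completely isometric ternary embedding $x\mapsto \begin{bmatrix} xz^* & x(1-z^*z)^{1/2}\\ 0&0\end{bmatrix}$, which is multiplicative for the product $xz^*y$ and exhibits a concrete realization. For uniqueness, your conclusion is right but the reason given is off: $X$ generates the triple envelope $\mathcal{T}(X)$, not $I(X)$, as a TRO, so ``annihilated by the generating subspace'' does not apply verbatim to $w\in I(X)$; the correct tool is the nondegeneracy statement the paper uses repeatedly, \cite[Proposition 4.4.12]{BLM} (applied twice, once on each side), which does yield $Xw^*X=0\Rightarrow w=0$.
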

        
        \begin{proof} A short proof may be found in 
        \cite[Theorem 5.2]{BNm2} (the original article is \cite{KP}). 
        Indeed that $x z^* y$
        is an operator algebra product  is immediate from 
         Remark 2 on p.\ 194 of \cite{BRS}, viewing $I(X)$ as a TRO in $B(H)$, and setting $V$ in that remark to be $z^*$. The real case is in \cite[Section 3]{BReal}. \end{proof}

More generally than one direction of this, we have:

 \begin{theorem} \label{BRSKP}  ({\rm C.f.\ } \cite[Remark 2 on p.\ 194]{BRS}) \ If $X$ is a subspace of a TRO or $C^*$-algebra $B$, and if we have a fixed element 
$z \in {\rm Ball}(B)$ with $X z^* X \subseteq X$, then $X$ is an operator algebra with product $x z^* y$. 
Conversely if  $A$ is a subalgebra of $M_n$ for $n \in \bN$ and if $W$ is the TRO generated by $A$ in $M_n$ then 
there exists $z \in {\rm Ball}(W)$ with $xy = x z^* y$ for $x, y \in A$.  \end{theorem}

\begin{proof}  The first assertion follows by the same
proof from~\cite{BRS} alluded to in the last lines of the proof
of  \ref{KP}.  To see the `conversely',  note that 
$W$ is injective (as is any finite dimensional TRO), and let $P : M_n \to W$ be a completely contractive projection.  As in the 
proof of  \cite[Theorem 5.2]{BNm2} if $z = P(1) \in W$ then $xy = x z^* y$ for $x, y \in A$. \end{proof}
        
      In places in our paper we propose using  these last results
       as an often  effective way to test if the product on        an algebra is an  operator algebra product.
       That is, finding such an element $z$ as above with the product $xy = xz^*y$ for all $x,y \in A$,
       or showing no such $z$ exists.
       We have already said that in many low dimensional examples even finding $I(A)$ 
       can be  practical, and then one may seek for $z$ in here.  As an example, the reader could 
       try this with $A$ the strictly upper triangular matrices in $M_3$, and with the reversed product. 

  We recall for any operator space $X$ the {\em opposite}, {\em adjoint}, and {\em conjugate} operator spaces $X^\circ, \bar{X},$ and $X^\star$ from e.g.\ 1.2.25 in \cite{BLM} and \cite{P}.  
Here $X^\circ$ is $X$ but with `transposed matrix norms' $|||[x_{ij} ]||| = \| [x_{ji} ] \|$.
Similarly $X^\star$ is the set of formal symbols $x^\star$ for $x \in X$, but with  the same operator space structure 
as  $\{ x^* \in B : x \in X \}$, if $X$ is (completely isometrically) a subspace of a 
$C^*$-algebra $B$.   Thus $\| [x_{ij}^\star] \|_n = \| [x_{ji} ] \|_n$, so that $X^\circ = X^\star$ real completely isometrically via the map $x^\circ \mapsto x^*$. 
On the other hand $\bar{X} = (X^{\star})^\circ$, with $\| [\overline{x_{ij}}] \|_n = \| [x_{ij}] \|_n$ (see \cite[Section 5]{BReal}). 
We define  ${\rm Sym}(X)$ to be $X$ but with
matrix norms $\max \{ \| [x_{ij} ] \| , \| [x_{ji} ] \| \}$.  Equivalently the map $x \mapsto (x,x^{\circ}) : {\rm Sym}(X) \to X \oplus^\infty X^{\circ}$ is a complete isometry.

We say that an operator space  $X$ is {\em symmetric} 
if  $\| [a_{ij} ] \| = \| [a_{ji}] \|$ for $a_{ij} \in X$.   Equivalently, $X = {\rm Sym}(X)$; or $X \cong X^{\circ}$ linearly completely isometrically via the map $a \mapsto a^\circ$.   If $X \subseteq B(H)$ this is also equivalent to saying that the transpose map on $B(H)$  with respect to some orthonormal basis of $H$ 
 (which is an isometric antihomomorphism), is 
 completely isometric (or even just completely contractive).
 
This definition has the subtlety that it is not in general equivalent to saying that $X \cong X^{\circ}$ linearly completely isometrically, or even 
algebraically completely isometrically if $X$ is an operator algebra.   For example if $A = M_n$ then  $A \cong A^{\circ}$ even $*$-isomorphically, but $A$ is not symmetric.

We recall that Ruan showed that every real  operator space $X$ has a unique `completely reasonable' complexification (see e.g.\ \cite{BReal,BCK}, in particular a short proof may be found in 
Theorem 2.2 of the latter reference).  If $X \subset B(H)$ then we can identify $X_c$ with $X + iX$ in $B(H)_c \cong B(H_c)$.  Ruan also gave a pretty abstract characterization of $X_c$ inside $M_2(X)$ (see the cited references). 

\begin{proposition} \label{ciop}  If $X$ is a real  operator space and $X_c$ its complexification
 then
\begin{itemize}
\item [(1)]  $(X_c)^\circ = (X^\circ)_c$ and ${\rm Sym}(X_c) = ({\rm Sym}(X))_c$.
\item [(2)] $(X_c)^\star= (X^\star)_c$ and $\overline{X_c} = (\bar{X})_c$.
\end{itemize}
\end{proposition}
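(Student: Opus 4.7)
The plan is to verify each of the four identities by noting that, under the natural vector space identification, the canonical matrix norms on the two sides agree. Recall that if $X \subseteq B(H)$ for a real Hilbert space $H$, then $X_c$ sits completely isometrically in $B(H_c)$, and that at the level of matrices one has $M_n(X_c) = M_n(X)_c$, so the norm on $M_n(X_c)$ is given by the canonical complexification formula applied to the $M_n(X)$-norms.

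For (1), I would exploit that the entrywise transpose is an $\mathbb R$-linear operation that commutes with real linear combinations, direct sums, and taking suprema --- precisely the ingredients that appear in the canonical formula for the complexification norms. Concretely, for $u + iv \in M_n(X_c)$, the norm in $(X_c)^\circ$ is obtained by first transposing indices in $u + iv$ and then applying the complexification formula built from $M_n(X)$-norms, whereas the norm in $(X^\circ)_c$ is obtained by applying the complexification formula built from $M_n(X^\circ)$-norms (which are themselves transposes of $M_n(X)$-norms). Since transposition commutes with every real operation in the formula, the two agree. The identity $\mathrm{Sym}(X_c) = (\mathrm{Sym}(X))_c$ then follows immediately because $\mathrm{Sym}$ is just the max of the identity norm and the $\circ$-norm, and this maximum (which is the $\ell^\infty$-direct sum $X \oplus^\infty X^\circ$) commutes with complexification.

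For (2), I would embed $X$ completely isometrically in a real $C^*$-algebra $B$, so that $X_c \subseteq B_c$ and $B_c$ is a complex $C^*$-algebra whose involution is the complex-antilinear extension $(x+iy)^* = x^* - iy^*$ of the involution on $B$. The natural identification is then $(x+iy)^\star \leftrightarrow x^\star - i\,y^\star$, and under this bijection the $M_n$-norm on either side reduces to $\|[x_{ji}^* - i\,y_{ji}^*]\|$ computed in $M_n(B_c)$. Finally, the identity $\overline{X_c} = (\bar X)_c$ is obtained by combining the two established identities via $\bar X = (X^\star)^\circ$.

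The main obstacle is bookkeeping: keeping straight the $\mathbb C$-linear versus $\mathbb C$-antilinear structures in the identification $(X_c)^\star \cong (X^\star)_c$, since the $\star$-operation is naturally antilinear in the complex setting while complexification introduces its own imaginary unit. Once the correct sign conventions are fixed (namely the antilinear extension above), each norm comparison is a routine unwinding of the complexification formula, and the only real content is that all four operations --- $\circ$, $\star$, $\mathrm{Sym}$, and $\bar{\cdot}$ --- are built from ingredients compatible with the canonical complexification of operator spaces.
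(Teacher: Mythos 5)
Your proposal is correct and follows essentially the same route as the paper, which leaves these verifications as an exercise with precisely the ingredients you use: the canonical (Ruan) complexification computed via a concrete embedding, the embedding ${\rm Sym}(X) \subseteq X \oplus^\infty X^{\circ}$ for the Sym identity, and $\bar{X} = (X^{\star})^\circ$ to deduce $\overline{X_c} = (\bar{X})_c$. One small point to tighten in (1): transposition does not literally commute with the $2\times 2$ block complexification formula (a sign flips in the off-diagonal blocks), but the two resulting block matrices are conjugate by the scalar unitary $\mathrm{diag}(I,-I)$, hence have equal matrix norms, so your conclusion stands.
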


\begin{proof}       We leave most of this as an exercise.   Some use Ruan's results
\cite[Proposition 2.1 or Theorem 2.2]{BCK}. The fact  that 
 $\bar{X} = (X^{\star})^\circ$ is helpful for some of these. Since ${\rm Sym}(X) \subseteq X \oplus^\infty X^{\circ}$ we have
 ${\rm Sym}(X)_c \subseteq X_c \oplus^\infty X^{\circ}_c$, from which  it is easy to see that 
 ${\rm Sym}(X_c) = ({\rm Sym}(X))_c$.
\end{proof}

The following simple result is as in \cite{BComm,BReal}: 

\begin{proposition} \label{p1}   \ If $A$ is a real  or complex   operator algebra then $A^{\circ}$ is an operator algebra with reversed multiplication.
 \end{proposition}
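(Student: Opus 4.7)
The plan is to give a concrete realization argument: represent $A$ faithfully on a Hilbert space, apply the classical transpose antihomomorphism, and verify that the resulting concrete operator algebra has exactly the operator space structure of $A^\circ$ with reversed product.

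First I would fix a completely isometric algebra embedding $A \hookrightarrow B(H)$ and pick an orthonormal basis of $H$, which yields the transpose map $\tau : B(H) \to B(H)$. The key classical facts I would recall are that $\tau$ is an isometric antihomomorphism, and that under the identification $M_n(B(H)) = B(H^{(n)})$, the operator transpose of a block matrix $[T_{ij}]$ (with respect to the product basis) is the block matrix $[\tau(T_{ji})]$. Consequently, since operator transposes are isometric on $B(H^{(n)})$, we get the identity
\[
\|[T_{ij}]\|_{M_n(B(H))} = \|[\tau(T_{ji})]\|_{M_n(B(H))}.
\]

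Next I would set $B = \tau(A) \subseteq B(H)$ and show that the map $\varphi : A^\circ \to B$ sending $a^\circ \mapsto \tau(a)$ is a completely isometric algebra isomorphism onto $B$, when $A^\circ$ is equipped with reversed multiplication. For the completely isometric part, I would apply the identity above with $T_{ij} = a_{ji}$, giving $\|[\tau(a_{ij})]\| = \|[a_{ji}]\|$, which is by definition the norm of $[a_{ij}^\circ]$ in $M_n(A^\circ)$. For the algebra part, $\tau$ being an antihomomorphism gives $\tau(a)\tau(b) = \tau(ba)$, which is exactly what the reversed product on $A^\circ$ demands. Thus $A^\circ$ with reversed product is completely isometrically isomorphic to the concrete operator algebra $B \subseteq B(H)$, so it is an abstract operator algebra.

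No step is really an obstacle here; the only subtle point is the basis bookkeeping that links entrywise transpose on $M_n(B(H))$ with the genuine Hilbert space transpose on $B(H^{(n)})$, but this is a routine identification. In the real case the same argument goes through verbatim, and one could alternatively complexify via Proposition \ref{ciop}(1) and invoke the complex result. A cleaner (but essentially equivalent) rendering would bypass concrete representations and use Theorem \ref{BRSKP}: if $xy = x z^* y$ for some $z$ in a TRO containing $A$, then the reversed product on $A^\circ$ is $x z^* y$ computed inside the TRO $I(A)^\circ$, so the same Kaneda--Paulsen criterion certifies $A^\circ$ as an operator algebra; I would mention this as an alternative but favor the transpose argument for concreteness.
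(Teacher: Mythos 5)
Your proof is correct, and it follows the standard route: the paper gives no argument of its own for Proposition \ref{p1} (it simply cites \cite{BComm,BReal}), and the transpose-realization argument you give --- the block-transpose identity $\|[T_{ij}]\| = \|[\tau(T_{ji})]\|$ making $a^\circ \mapsto \tau(a)$ a completely isometric homomorphism from $A^\circ$ with reversed product onto the concrete algebra $\tau(A)$ --- is exactly the argument those references rely on, and it is consistent with the paper's own conventions (e.g.\ its transpose-based description of symmetric operator spaces). The Kaneda--Paulsen aside is dispensable and as sketched would need more care (identifying $I(A^\circ)$ with the opposite TRO), but since you only offer it as an alternative, it does not affect the proof.
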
 

Thus when we refer to $A^{\circ}$ as an  algebra it will usually be with the above operator space structure and with reversed multiplication, unless noted to the contrary.  We sometimes write
$A^{\rm rev}$ for $A$ with its usual  operator space structure and with reversed multiplication.

The spaces $X^\circ$ and $X^\star$ play a significant role in \cite{BWinv}. There are however a few misstatements in that paper, which we take the opportunity to correct.   Most of them are around  \cite[Corollary 2.5]{BWinv}.  First, above that Corollary it is stated that symmetric operator algebras ``were introduced in \cite{BComm} where it was observed that such algebras were commutative''.   As noted above, \cite{BComm}  only proves commutativity in the unital case.  
 Second, the statement of \cite[Corollary 2.5]{BWinv} should begin with ``An  operator algebra $A$  is symmetric {\em and commutative} 
 if and only if there exists...''.    This misstatement also appears in the middle of the first paragraph of Section 1.1 of the paper, the word {\em commutative} should be added before 
 ``symmetric operator algebras."  General symmetric operator algebras are treated in the present paper. 
 Similarly the last sentence of the statement of \cite[Corollary 2.5]{BWinv} should begin with ``Equivalently, $A$  is symmetric {\em and commutative}  if and only if there exists...''.   Indeed the first line of the proof only works if $A$ is also commutative, but the proof still seems to be relatively deep even in this case.  We state the result formally as Corollary \ref{chsym} below.  

\begin{theorem} \label{thmsym}  A real  (resp.\ complex)   operator space $X$ is  symmetric if and only if $X$ is linearly completely isometrically isomorphic to a 
real  (resp.\ complex)  space of operators  on a complex Hilbert space 
whose matrix (representation) with respect to some fixed orthonormal basis is 
symmetric (that is the matrices are symmetric in the undergraduate sense).   Equivalently, if and only if  there exists a conjugation $c$ on a complex 
 Hilbert space $H$ on which $X$ may be real  (resp.\ complex)  
 completely isometrically represented such that $c x c = x^*$ for all $x \in X$ (here we are identifying $X$ with its image in $B(H)$). 
\end{theorem}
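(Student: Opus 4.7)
The plan is to first verify that the two conditions on the right-hand side of the biconditional are equivalent, and then to establish the main equivalence with symmetry of $X$ as an operator space.

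For the equivalence of the two right-hand conditions: every conjugation $c$ on a complex Hilbert space $H$ is determined by an orthonormal basis $\{e_i\}$ of the real Hilbert space $\{v \in H : cv = v\}$, which is then an orthonormal basis of $H$ fixed by $c$, via $c(\sum_i \alpha_i e_i) = \sum_i \overline{\alpha}_i e_i$. With respect to this basis, if $T$ has matrix $[t_{ij}]$ then $cTc$ has matrix $[\overline{t}_{ij}]$ while $T^*$ has matrix $[\overline{t}_{ji}]$; hence $cxc = x^*$ for every $x \in X$ precisely when each $x \in X$ has a symmetric matrix representation.

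For the $(\Leftarrow)$ direction I would use the transpose map $\tau : B(H) \to B(H)$ with respect to the fixed orthonormal basis, which coincides with $T \mapsto c T^* c$ and is therefore a linear complete isometry, being the composition of the two complete isometries $T \mapsto cTc$ and $T \mapsto T^*$. The hypothesis says $\tau$ fixes each $x \in X$. Amplifying, the transpose $\tau_n$ on $B(H^{(n)})$ sends $[x_{ij}] \in M_n(X)$ to $[\tau(x_{ji})] = [x_{ji}]$; since $\tau_n$ is an isometry, $\|[x_{ij}]\| = \|[x_{ji}]\|$, so $X$ is symmetric.

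For the $(\Rightarrow)$ direction I would fix any completely isometric representation $\pi : X \to B(H)$ on a complex Hilbert space $H$, pick an orthonormal basis to obtain a transpose map $\tau$ and a conjugation $c_0$ on $H$, and define $\psi : X \to B(H \oplus H)$ by
\[
\psi(x) = \begin{pmatrix} 0 & \pi(x) \\ \tau(\pi(x)) & 0 \end{pmatrix}.
\]
The main obstacle to verify is that $\psi$ is completely isometric: under the canonical identification $M_n(B(H \oplus H)) \cong B(H^{(n)} \oplus H^{(n)})$, $\psi_n([x_{ij}])$ becomes an off-diagonal $2 \times 2$ block matrix whose two nonzero blocks have norms $\|[\pi(x_{ij})]\| = \|[x_{ij}]\|$ and $\|[\tau(\pi(x_{ij}))]\| = \|[\pi(x_{ji})]\| = \|[x_{ji}]\|$, and by symmetry of $X$ these coincide. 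With $c = c_0 \oplus c_0$ on $H \oplus H$, a direct calculation using $c_0 T c_0 = \overline{T}$ and $\overline{T} = \tau(T)^*$ yields $c \psi(x) c = \psi(x)^*$, producing simultaneously both right-hand formulations of the theorem. The real case reduces to the complex one by passing to the complexification $X_c$, which is symmetric by Proposition \ref{ciop}, applying the complex construction to $X_c$, and then restricting back to $X$ viewed as a real subspace of $X_c$.
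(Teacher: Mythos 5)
Your proof is correct, and your $(\Leftarrow)$ direction is essentially the paper's own computation $\| [a_{ij}] \| = \| [a_{ji}^*] \| = \| [c a_{ji} c] \| = \| [a_{ji}] \|$, just phrased via the block transpose on $B(H^{(n)})$. Where you genuinely diverge is the forward direction: the paper views a symmetric operator space $X$ as a symmetric commutative operator algebra with the zero product and quotes the corrected \cite[Corollary 2.5]{BWinv} (cf.\ Corollary \ref{chsym}), whereas you give a self-contained construction, sending $x$ to the off-diagonal $2\times 2$ operator matrix with entries $\pi(x)$ and $\tau(\pi(x))$ for an arbitrary completely isometric representation $\pi$ and the transpose $\tau$ attached to a chosen orthonormal basis. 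The steps you need are all sound: the norm of an off-diagonal $2\times 2$ block operator matrix is the maximum of the block norms, $\| [\tau(a_{ij})] \| = \| [a_{ji}] \|$ because the full transpose on $B(H^{(n)})$ is isometric, and $c_0 T c_0 = \overline{T} = \tau(T)^*$, so $c = c_0 \oplus c_0$ implements $c\psi(x)c = \psi(x)^*$; symmetry of $X$ is exactly what makes $\psi$ completely isometric, and indeed your $\psi$ is a concrete transpose-invariant realization of ${\rm Sym}(X) \subseteq X \oplus^\infty X^{\circ}$. Your route buys independence from \cite{BWinv} (whose Corollary 2.5 the paper must correct in any case) and yields both right-hand formulations simultaneously from the equivalence you verify at the start; the paper's route buys brevity and ties the statement to the operator-algebra machinery (zero-product trick plus Corollary \ref{chsym}) that it also uses in Theorem \ref{mainc}. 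Your real-case reduction via Proposition \ref{ciop} and complexification is the same device the paper uses for the real case elsewhere, so nothing is lost there.
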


 \begin{proof}   If $X$ is a symmetric operator space then $X$ is a symmetric commutative 
 operator algebra with the zero product. 
 By the corrected statement of \cite[Corollary 2.5]{BWinv} above, or by Corollary \ref{comm} below, $X$  is completely isometrically isomorphic to a
space of  symmetric matrices with the zero product, and there exists a conjugation $c$ with the stated 
properties.   Conversely if $c$ is a conjugation as above then 
$$\| [a_{ij} ] \| = \| [a_{ji}^* ] \| = \| [ ca_{ji} c ] \| = \| [a_{ji} ] \|$$ 
for $[a_{ij}] \in M_n(X)$.  So $X$ is symmetric. 
If $X$ is a space of symmetric matrices then it is easy to see that it is a symmetric operator space. 
 The rest is as in and around  \cite[Corollary 2.5]{BWinv}. 
\end{proof}

\begin{proposition} \label{classof} The class of reversible (resp.\ symmetric) operator algebras is closed under 
subalgebras, quotients by closed ideals, $\oplus^\infty$ direct sums, biduals, and minimal tensor product.  A real operator algebra $A$ is  reversible (resp.\ symmetric)
if and only if its complexification $A_c$ 
(see e.g.\ \cite{BReal,BCK}) is  reversible (resp.\ symmetric). 
\end{proposition}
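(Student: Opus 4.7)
The plan is to handle each closure property by exhibiting the opposite-and-reversed construction as a standard operator-algebraic operation applied to the opposites. The unifying observation is that the opposite operator-space functor $X \mapsto X^{\circ}$ commutes with all the operations listed (subspaces, quotients by closed ideals, $\oplus^{\infty}$-direct sums, biduals, and the minimal tensor product), and that the reversed product on each construction coincides with the construction formed from the reversed products of the inputs. Once the operator space identifications involving $(\cdot)^{\circ}$ are in hand, reversibility of the output follows because the class of operator algebras is itself closed under the corresponding operation. The symmetric case is in each instance easier, because the identity $\|[x_{ij}]\| = \|[x_{ji}]\|$ is transparently stable under each construction.

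First I would dispatch subalgebras and $\oplus^{\infty}$-sums: a closed subalgebra $B \subseteq A$ yields $B^{\circ} \subseteq A^{\circ}$ completely isometrically, and $(A_1 \oplus^{\infty} A_2)^{\circ} = A_1^{\circ} \oplus^{\infty} A_2^{\circ}$. For a quotient by a closed two-sided ideal $I$ (which remains two-sided in $A^{\rm rev}$), a straightforward matrix-level infimum computation shows $(A/I)^{\circ} = A^{\circ}/I$ completely isometrically, since transposition commutes with taking infima over $M_n(I)$. For the minimal tensor product, an entrywise transpose argument, combined with the fact that transposition on $B(H \otimes K)$ with respect to the product of orthonormal bases equals the tensor of the transpositions on $B(H)$ and $B(K)$, gives $(A \otimes_{\rm min} B)^{\circ} = A^{\circ} \otimes_{\rm min} B^{\circ}$, and the reversed product on the tensor product is tautologically the tensor of the reversed products. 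The symmetric analogues are immediate from the inheritance of $\|[x_{ij}]\| = \|[x_{ji}]\|$ under each of these operations. The complexification assertion then reduces to Proposition \ref{ciop} together with the standard fact (via Ruan) that a real operator space $A$ is a real operator algebra if and only if $A_c$ is a complex operator algebra.

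The main obstacle, as I see it, is the bidual. Here I would use the completely isometric identification $(X^{\circ})^{**} = (X^{**})^{\circ}$, which comes from the definition of the dual operator space structure and the matrix-level duality $M_n(X^{*}) = CB(X, M_n)$. One then must check that the reverse of the chosen Arens product on $A^{**}$ coincides with the Arens product on $(A^{\circ})^{**}$ obtained by extending the reversed product of $A^{\circ}$; this is the classical symmetry between the two Arens products under reversal of multiplication. Combined with the known fact that the bidual of an operator algebra is again an operator algebra, this gives the desired reversibility. The symmetric case for biduals follows from $w^{*}$-continuity of the canonical embedding and the transposition-compatibility of the dual operator space structure at each matrix level.
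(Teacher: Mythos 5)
Your proposal is correct and follows essentially the same route as the paper's proof: for each construction you identify the reversed (resp.\ opposite) of the output with the construction applied to $A^{\rm rev}$ (resp.\ $A^{\circ}$) and then invoke closure of the class of operator algebras under that construction, with the bidual handled via the Arens-product symmetry and $(A^{\rm rev})^{**}=(A^{**})^{\rm rev}$, $(A^{\circ})^{**}\cong (A^{**})^{\circ}$, and the complexification via $(A_c)^{\circ}=(A^{\circ})_c$ and complexifying representations. The only point to make explicit is Arens regularity of operator algebras, which is what makes ``the chosen Arens product'' on $A^{**}$ unambiguous when you match it with the reversed product.
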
 

\begin{proof}   We leave some of these as exercises.
If an operator algebra $A$ is reversible (resp.\ symmetric) and $I$ is a closed ideal, then $I$ is reversible
(resp.\ symmetric).  Moreover $(A/I)^\circ = A^\circ/I^\circ$ as operator spaces.  So if $A$ is symmetric then so is 
$A/I$.   If $A$ is reversible, then $I^{\rm rev}$ is an ideal in $A^{\rm rev}$ and $A^{\rm rev}/I^{\rm rev}$ is 
an operator algebra, and is completely isometrically isomorphic 
to $(A/I)^{\rm rev}$ as algebras.  So the latter is an operator algebra and 
 so $A/I$ is reversible.

Suppose that $\pi : A^{\rm rev} \to B(H)$ is a completely isometric homomorphism.  Taking $\pi^{**}$, and using Arens regularity of $A$ and the fact that $(A^{\rm rev})^{**} = (A^{**})^{\rm rev}$ as Banach algebras and as operator spaces we see that $A^{**}$ is reversible.  
Similarly for symmetric since $(A^{\circ})^{**} \cong (A^{**})^{\circ}$. 

 If $A, B$ are  reversible then 
$A^{\rm rev} \minten B^{\rm rev} \cong (A \minten B)^{\rm rev}$ completely isometrically  and 
as algebras.   So $A \minten B$ is reversible. Similarly for symmetric since $A^{\circ} \minten B^{\circ} \cong (A \minten B)^{\circ}$. 

If $\pi : (A,\circ) \to B(K)$ is a completely isometric real
 homomorphism for the reversed multiplication $\circ$,
then $\pi_c : (A,\circ)_c = (A_c,\circ) \to B(K)_c$ is a completely isometric  homomorphism for the reversed multiplication.    So $A_c$ is 
reversible (resp.\ symmetric) if $A$ is (for symmetric use the relation $(A_c)^\circ = (A^\circ)_c$
from Proposition \ref{p1}).    
The converses follow since $A$ is a subalgebra of $A_c$. 
\end{proof}

{\bf Remark.}  The class of reversible (resp.\ symmetric) operator algebras is also closed under the `opposite', `adjoint', `conjugate'
constructions mentioned earlier.  Note that ${\mathcal U}(X)$, for an operator space $X$, is only reversible if 
$X= (0)$.  Indeed this is unital and so is only commutative in this case.  Similarly if $A$ is commutative we obviously  cannot expect $M_n(A)$ to be reversible in general
unless $n = 1$.    The algebra generated by a symmetric operator space need not be symmetric.  This is false even for 
symmetric operator systems.  One way to see this is to take any operator system $V$ which is
not `minimal', that is, not a function system, such as $V = M_2$.    Then ${\rm Sym}(V)$ is a
symmetric operator system but is also not `minimal'.  The closed algebra generated by a symmetric operator system is a $C^*$-algebra, which would be commutative if it were symmetric.   However if $C^*({\rm Sym}(V))$ was commutative then 
${\rm Sym}(V)$ is `minimal', a contradiction.  This example was constructed in a discussion with  M. Kalantar.

\section{Reversible and symmetric operator algebras} \label{rsy}

The complex version of 
most of the following results were proved in the unital case in \cite{BComm}.  Whether 
(i)--(iv) were equivalent for general nonunital operator algebras 
was  stated there to be an open question in the nonunital case.  
 The approximately unital complex case is not hard, following similarly to the unital case, and has been stated in some form in some of our referenced papers.  We solve the general nonunital operator algebra case of these  questions in the present paper, showing that they are not true in general, but giving many sufficient conditions for commutativity. 

We say that an operator algebra $A$ is {\em idempotent} if $A^2$, the closure of the span of products of two elements is dense in $A$.   This includes all operator algebras with a one-sided or two-sided bounded approximate identity, topologically simple operator algebras,  finite dimensional semisimple operator algebras,   etc.
We say that an operator algebra $A$ is {\em left faithful} if $aA = 0$  implies that $a =0$.
Similarly for {\em right faithful}: if $Aa = 0$ then  $a =0$, and {\em faithful} means both left and right faithful.
We shall at some point in the following result appeal to Theorem \ref{both2} from the next section.  We do this to avoid for now the rather technical details necessary for that result. 

 \begin{theorem}  \label{mainc} For an
 idempotent 
 or left or right faithful real  (resp.\ complex)  operator algebra $A$ the following are equivalent:
  \begin{itemize}
\item[(i)] $A$ is a commutative algebra. 
 \item[(ii)] $A$  is reversible.
  \item[(iii)] $A^{\circ}$ with usual multiplication of $A$ is (completely isometrically) an operator algebra.
  \item[(iv)] {\rm Sym}$(A)$ is (completely isometrically) an operator algebra with usual multiplication of $A$.
 \item[(v)]  $A$ is  isometrically isomorphic to a real  (resp.\ complex) algebra of 
 operators on a complex Hilbert space whose matrix (representation) with respect to some fixed orthonormal basis is 
symmetric  (that is the matrices are symmetric in the undergraduate sense).   \end{itemize} 
  \end{theorem}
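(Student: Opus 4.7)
\textbf{Proof plan for Theorem \ref{mainc}.}

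The plan is to handle the equivalences (ii)--(v) without using the extra hypothesis, and then to push (ii)$\Rightarrow$(i) using the cited 3-commutativity result (Theorem \ref{both2}). For (ii)$\Leftrightarrow$(iii), I would observe that for any operator space $Y$ one has $(Y^{\circ})^{\circ}=Y$ completely isometrically, and that applying $\circ$ to the algebra structure interchanges ``reversed multiplication'' with ``usual multiplication'': if $A^{\mathrm{rev}}$ is an operator algebra, then $(A^{\mathrm{rev}})^{\circ}$ is an operator algebra by Proposition \ref{p1}, and it coincides with $A^{\circ}$ carrying the usual product of $A$. For (iii)$\Leftrightarrow$(iv), I would use $\mathrm{Sym}(A)\hookrightarrow A\oplus^{\infty} A^{\circ}$ via $a\mapsto(a,a^{\circ})$; this embedding is an algebra homomorphism exactly when $A^{\circ}$ carries the usual product, so it exhibits $\mathrm{Sym}(A)$ as a closed subalgebra of an operator algebra, and conversely any operator algebra structure on $\mathrm{Sym}(A)$ is inherited by the two coordinate projections. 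The equivalence (iv)$\Leftrightarrow$(v) is essentially Theorem \ref{thmsym} together with the observation that $\mathrm{Sym}(A)=A$ exactly when $A$ is a symmetric operator space.

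For (i)$\Rightarrow$(v), I would first note that if $A$ is commutative, then its unitization $A^{1}$ is commutative and unital, so the unital case from \cite{BComm} (or the corrected \cite[Corollary 2.5]{BWinv}) gives a completely isometric homomorphism of $A^{1}$ onto a space of symmetric matrices, and restriction to $A$ gives (v). The direction (v)$\Rightarrow$(i) is automatic: if $A$ consists of symmetric matrices on a Hilbert space equipped with a conjugation $c$ satisfying $cxc=x^{*}$, then for $a,b\in A$ the product $ab$ lies in $A$ and hence is symmetric, so $ab=(ab)^{T}=b^{T}a^{T}=ba$.

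The main content is (ii)$\Rightarrow$(i), and here the plan is to invoke Theorem \ref{both2} to conclude that $A$ is 3-commutative, meaning $abc=\sigma(abc)$ for every permutation $\sigma$ of three factors. From $abc=bac$ I get $[a,b]\,c=0$ for all $a,b,c\in A$, so $[A,A]\cdot A=\{0\}$; from $abc=acb$ I get $A\cdot[A,A]=\{0\}$; and from $abc=cab$ I get $[ab,c]=0$, i.e.\ $A^{2}\subseteq Z(A)$, and by continuity $\overline{A^{2}}\subseteq Z(A)$. Now I apply the three alternative hypotheses separately. If $A$ is left faithful, then $[A,A]\cdot A=\{0\}$ forces $[A,A]=0$. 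If $A$ is right faithful, then $A\cdot[A,A]=\{0\}$ forces $[A,A]=0$. If $A$ is idempotent, then $\overline{A^{2}}=A\subseteq Z(A)$, again giving commutativity.

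The main obstacle is entirely encapsulated in Theorem \ref{both2}: granting 3-commutativity of reversible operator algebras, the arguments above are purely algebraic and routine. The rest of the technical work (standard position of the injective envelope, the TRO-compatible representation, etc.) lives in that theorem's proof, and the present result is its first major payoff.
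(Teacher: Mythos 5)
Your handling of the core implication (ii)$\Rightarrow$(i) is fine and essentially matches the paper: invoke Theorem \ref{both2}(4) for 3-commutativity, then note that $[A,A]\cdot A=0$ kills $[A,A]$ under left faithfulness, $A\cdot[A,A]=0$ under right faithfulness, and $\overline{A^2}\subseteq Z(A)$ gives commutativity in the idempotent case (your purely algebraic use of 3-commutativity for the faithful cases is if anything a little cleaner than the paper's route through Theorem \ref{both2}(2) and the remark on $x\mapsto xz^*$). Likewise your (ii)$\Leftrightarrow$(iii) and (iii)$\Rightarrow$(iv) via $a\mapsto(a,a)\in A\oplus^\infty B$ are the paper's arguments, and (v)$\Rightarrow$(i) is correct.

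However, there is a genuine gap in your plan to ``handle the equivalences (ii)--(v) without using the extra hypothesis'': that is false, and the paper's own Example \ref{Ex1} is a counterexample. That algebra is a symmetric (so ${\rm Sym}(A)=A$), noncommutative operator algebra, so it satisfies (ii), (iii), (iv) but not (v) --- since, as you yourself observe, (v) forces commutativity. The faulty steps are precisely the two converses you wave at: (iv)$\Rightarrow$(v) is \emph{not} ``essentially Theorem \ref{thmsym}'', because that theorem produces only a \emph{linear} complete isometry onto symmetric operators, whereas (v) demands a multiplicative (algebra) isomorphism; and (iv)$\Rightarrow$(iii) does not follow from ``the two coordinate projections'', since a complete contraction from the operator algebra ${\rm Sym}(A)$ onto $A^\circ$ does not transfer the operator-algebra property to $A^\circ$ with its own matrix norms. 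The paper closes the loop differently: from (iv) it passes to ${\rm Sym}(A)$, which equals its own opposite and inherits the idempotent/faithful hypothesis, applies the already-proved (ii)/(iii)$\Rightarrow$(i) to it to get commutativity first, and only then uses the corrected \cite[Corollary 2.5]{BWinv} (which needs both symmetry and commutativity) to obtain (v). Your (i)$\Rightarrow$(v) via the unitization is also miscited --- the unital result of \cite{BComm} is a commutativity criterion, not a symmetric-matrix representation theorem; the relevant input is the \cite{BWinv} result recorded as Corollary \ref{chsym2} (commutative $\Leftrightarrow$ isometrically isomorphic to an algebra of symmetric matrices) --- and you should also record the trivial (i)$\Rightarrow$(ii) so that the cycle actually closes once (iv)$\Rightarrow$(v) is rerouted through (i) as above.
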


  \begin{proof}  (i) $\Rightarrow$ (ii)\ This is obvious, and holds without the `idempotent'  or faithful assumption.   
  
  (ii) $\Rightarrow$ (i)\ If $A$  is reversible
  then it is 3-commutative by Theorem \ref{both2} (4). 
  So if $A$ is idempotent then (i) holds.
  Similarly, if $A$ is left faithful then (i) holds by Theorem \ref{both2} (2) and the first Remark  after that result.  
  
  The rest of the equivalences of (i)-(iv) follow as in the complex unital case in \cite{BComm}.
  For example the equivalence of (ii) and (iii) is  as in \cite{BComm}, and this  without the `idempotent'  or faithful assumption. 
  If (iv) holds
  then so does (iii) and (ii) with $A$ replaced by {\rm Sym}$(A)$, and since the latter is also an idempotent 
  or faithful algebra isomorphic to $A$ 
  we can  apply (iii) or (ii) to it to  imply  (i).  
  
  (iii)  $\Rightarrow$ (iv)\ This works without the `idempotent'  or faithful assumption. 
It follows   by viewing Sym$(A)$ as a subalgebra of $A \oplus B$ via the 
completely isometric homomorphism $a \mapsto (a,a)$, where $B = A$ with its usual product but with $A^{\circ}$ operator space structure.   
By hypothesis $B$ is an operator algebra. 
  
(v) $\Rightarrow$ (i)\ This is as in the lines above  \cite[Corollary 2.5]{BWinv}, and does not require the `idempotent' or faithful condition.   

(iv) $\Rightarrow$ (v)\ Finally, if (iv) holds then so does (i), so that ${\rm Sym}(A)$ is symmetric and commutative. 
  In the complex case the proof of \cite[Corollary 2.5]{BWinv} 
  shows that ${\rm Sym}(A)$ is completely 
  isometrically isomorphic to an algebra of 
 operators whose matrix (representation) with respect to some fixed orthonormal basis is 
symmetric.  This gives (v).  In the real case Proposition \ref{ciop} gives 
${\rm Sym}(A_c) = ({\rm Sym}(A))_c$.  So (iv) implies ${\rm Sym}(A_c)$ 
is (completely isometrically) an operator algebra, and hence by the complex case 
${\rm Sym}(A_c)$, and therefore also its real subalgebra ${\rm Sym}(A)$, is real isometrically isomorphic to an algebra of 
 operators on a complex Hilbert space whose matrix (representation) with respect to some fixed orthonormal basis is 
symmetric. 
\end{proof} 

{\bf Remark.}  In the general case (i) implies (ii), and (ii) is equivalent to (iii), just as in  \cite{BComm}.   As we already said, (v) implies (i), and  (ii) or (iii) implies (iv). 
The latter condition is equivalent to {\rm Sym}$(A)$ is (completely isometrically) an operator algebra with reversed  multiplication of $A$. 

\bigskip

Examining the proof one sees one may relax the 
`faithful' condition in the theorem to simply requiring that  the commutator ideal 
  $J = [A,A]$ acts faithfully on $A$ on the left or on the right.   If this holds we shall say that $A$ is {\em c-faithful}. 
  For if this holds then since $A$ is 3-commutative we have e.g.\ $JA = (0)$, so $J = (0)$, which forces $A$ to be commutative.

    \begin{corollary} \label{comm}  Symmetric real or complex operator algebras are reversible.  
    Conversely if $A$ is a reversible 
    operator algebra, then Sym$(A)$ is a symmetric  operator algebra.    
    Idempotent or c-faithful real  (resp.\ complex)   operator algebras  which are 
    symmetric are commutative algebras.   \end{corollary}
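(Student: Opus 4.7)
The plan is to derive all three assertions as formal consequences of the definitions of \emph{symmetric} and \emph{reversible}, together with Theorem \ref{mainc} and especially the Remark immediately following it, which records that the equivalence (ii) $\iff$ (iii) of that theorem holds in full generality, with no idempotency or faithfulness assumption needed. For the first claim, suppose $A$ is symmetric. By definition this means the identity map $A \to A^\circ$ is a complete isometry of operator spaces, while Proposition \ref{p1} asserts that $A^\circ$ is always an operator algebra under the reversed multiplication of $A$. Transporting this operator algebra structure back along the identity map exhibits $A$, with its original operator space structure but with reversed multiplication, as an operator algebra; this is precisely what it means for $A$ to be reversible.

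For the converse direction, $\text{Sym}(A)$ is symmetric as an operator space by construction, so the only task is to check that it forms an operator algebra under the original multiplication of $A$. Since $A$ is reversible, the Remark quoted above yields that $A^\circ$ endowed with the \emph{usual} (not reversed) multiplication of $A$ is an operator algebra. The diagonal map $a \mapsto (a, a^\circ)$ is then a completely isometric algebra homomorphism from $\text{Sym}(A)$ into the operator algebra direct sum $A \oplus^\infty A^\circ$ (the latter equipped with the usual multiplication on each summand), and since this target is an operator algebra, so is $\text{Sym}(A)$.

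For the final assertion, if $A$ is symmetric then by the first part $A$ is reversible, and under the additional hypothesis that $A$ is idempotent or c-faithful the implication (ii) $\Rightarrow$ (i) of Theorem \ref{mainc} (together with its Remark covering the c-faithful case) delivers that $A$ is commutative. I do not expect any substantive obstacle in carrying this out; the entire content lies in unwinding definitions and invoking what has already been established. The only care required is bookkeeping: keeping track at each step of which multiplication (usual or reversed) is paired with which operator space structure ($A$, $A^\circ$, or $\text{Sym}(A)$), to make sure that the maps appearing really are algebra homomorphisms and complete isometries in the senses claimed.
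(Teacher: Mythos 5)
Your proposal is correct and follows essentially the same route as the paper: the first assertion via Proposition \ref{p1} and the definition of symmetric, the second via the general-case equivalence (ii) $\Leftrightarrow$ (iii) of Theorem \ref{mainc} and the diagonal embedding of ${\rm Sym}(A)$ into $A \oplus^\infty A^\circ$ (which is exactly the (iii) $\Rightarrow$ (iv) step in the paper's proof of that theorem), and the third by combining the first assertion with (ii) $\Rightarrow$ (i) of Theorem \ref{mainc} together with the c-faithful discussion following it. No gaps; the only cosmetic point is that the c-faithful case is handled in the paragraph after the Remark rather than in the Remark itself.
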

     
     \begin{proof}  The first assertion follows from Proposition \ref{p1}. 
     The rest is obvious from the proof of the last result, and from the remarks after it. 
 \end{proof} 

  \begin{corollary} \label{chsym}  A  real  (resp.\ complex)
 operator algebra $A$  is commutative and symmetric if and only if $A$ is completely isometrically isomorphic 
 to a real  (resp.\ complex) algebra of 
 operators on a complex Hilbert space whose matrix (representation) with respect to some fixed orthonormal basis is 
symmetric  (that is the matrices are symmetric in the undergraduate sense). 
This is also equivalent to:
there exists a conjugation $c$ on a complex Hilbert space $H$ on which $A$ may be completely isometrically represented 
as a real  (resp.\ complex) operator algebra,  such that $c a c = a^*$ for all $a \in A$ (here we are identifying $A$ with its image in $B(H)$).
That is, if and only if  there exists a  real  (resp.\ complex) 
completely isometric algebra representation of $A$ on a complex Hilbert space $H$ as $c$-symmetric operators
on $H$ for a  conjugation $c$.    \end{corollary}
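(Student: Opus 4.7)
My plan is to handle both implications by direct constructions, with the transpose map on $B(H)$ playing the central role.

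For the forward direction, the idea is that commutativity upgrades the transpose to an algebra homomorphism, while symmetry keeps it completely isometric. I would embed $A$ completely isometrically in some $B(H)$ and let $\tau$ denote the transpose on $B(H)$ relative to a fixed orthonormal basis. Since $A$ is commutative, $\tau(ab) = \tau(b)\tau(a) = \tau(a)\tau(b)$ for $a,b \in A$, so $\tau|_A$ is an algebra homomorphism; symmetry of $A$ makes it completely isometric. The ``doubled'' representation $\pi : A \to B(H \oplus H)$ defined by $\pi(a) = a \oplus \tau(a)$ is then a completely isometric algebra homomorphism, its complete isometry following from the block-diagonal computation $\|[\pi(a_{ij})]\|_n = \max(\|[a_{ij}]\|_n, \|[\tau(a_{ij})]\|_n) = \|[a_{ij}]\|_n$. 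Let $c$ be the conjugation on $H \oplus H$ given by $c(\xi \oplus \eta) = \bar\eta \oplus \bar\xi$, where $\bar{\,\cdot\,}$ is coordinate complex conjugation on $H$. A direct computation using the identity $\overline{\tau(a)} = a^*$ then yields $c\pi(a)c = a^* \oplus \tau(a^*) = \pi(a)^*$.

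For the reverse direction, symmetry of $A$ follows because conjugation by the antiunitary $c^{(n)}$ preserves operator norm, and $c^{(n)}[\pi(a_{ij})]c^{(n)} = [\pi(a_{ij})^*] = [\pi(a_{ji})]^*$, so $\|[a_{ij}]\|_n = \|[a_{ji}]\|_n$ by complete isometry of $\pi$. For commutativity, compute $c\pi(ab)c$ two ways: as $\pi(ab)^* = \pi(b)^*\pi(a)^*$, and as $(c\pi(a)c)(c\pi(b)c) = \pi(a)^*\pi(b)^* = c\pi(ba)c$; this forces $\pi(ab) = \pi(ba)$, hence $ab = ba$ by injectivity of $\pi$. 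The real case reduces to the complex one via complexification: $A_c$ is commutative and symmetric by Proposition \ref{ciop}, the complex case supplies a representation $\pi_c$, and its restriction to $A$ gives the required real representation.

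The main technical step is the forward-direction identity $c\pi(a)c = \pi(a)^*$, which rests on careful handling of the antilinear $c$ and the relation $\overline{\tau(a)} = a^*$; the remainder of the argument is routine bookkeeping with direct sums and matrix norms.
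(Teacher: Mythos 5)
Your proof is correct, but it takes a different route from the paper. The paper disposes of this corollary in two lines, by citing the corrected version of \cite[Corollary 2.5]{BWinv} (equivalently, the symmetric-matrix representation in Theorem \ref{mainc}\,(v)/Corollary \ref{comm}) together with the translation between ``symmetric matrix with respect to an orthonormal basis'' and ``conjugation $c$ with $cac=a^*$'' worked out in Theorem \ref{thmsym}. Your converse direction is essentially the paper's: the norm computation $\|[a_{ij}]\|=\|[ca_{ji}c]\|=\|[a_{ji}]\|$ via the antiunitary $c^{(n)}$ is exactly the one in the proof of Theorem \ref{thmsym}, and your commutativity argument ($c\pi(ab)c=\pi(b)^*\pi(a)^*$ versus $(c\pi(a)c)(c\pi(b)c)=\pi(ba)^*$) is the argument ``in the lines above \cite[Corollary 2.5]{BWinv}'' that the paper uses for (v)$\Rightarrow$(i) of Theorem \ref{mainc}. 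The genuinely different part is your forward direction: instead of invoking the symmetric-matrix representation from \cite{BWinv}, you build the conjugation directly by the doubling $\pi(a)=a\oplus\tau(a)$ on $H\oplus H$ with the flip conjugation $c(\xi\oplus\eta)=\bar\eta\oplus\bar\xi$, using commutativity to make $\tau|_A$ multiplicative and symmetry (i.e.\ $\|[\tau(a_{ij})]\|=\|[a_{ji}]\|=\|[a_{ij}]\|$) to keep $\pi$ completely isometric; the identity $c\pi(a)c=a^*\oplus\overline{a}=\pi(a)^*$ then checks out. This buys a short, self-contained and elementary proof that does not lean on \cite{BWinv}; what it does not give is the stronger symmetric-matrix-representation statements (Theorem \ref{mainc}\,(v), Corollary \ref{chsym2}) that the paper's cited machinery yields along the way. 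Two small points worth making explicit if you write this up: the equality $\|[\tau(a_{ij})]\|_n=\|[a_{ji}]\|_n$ (valid for the transpose with respect to any orthonormal basis) is the precise place where symmetry enters, and in the real case the converse needs no complexification, since your displayed computations are valid verbatim for a real-linear completely isometric representation.
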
 

  \begin{proof}  This deep result is
  from \cite{BWinv}; see the remarks above Theorem \ref{thmsym}.   The condition after the `if and only if' is equivalent to the orthonormal basis/symmetric matrix
  condition in Corollary \ref{comm}, as in the proof of \cite[Corollary 2.5]{BWinv}, or our Theorem \ref{thmsym}.   \end{proof} 

Similarly, the following two results follow by applying the complex case 
of  Corollary \ref{chsym}  as in 
\cite{BWinv} to $A_c$.
  
 \begin{corollary} \label{chsym2}  A  real  (resp.\ complex)  operator algebra $A$  is commutative
if and only if it is isometrically isomorphic to a real  (resp.\ complex)  algebra of  matrices 
(possibly of infinite size, with 
bounded operator norm, and possibly complex 
entries)  that equal their transpose.
\end{corollary}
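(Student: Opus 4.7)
The plan is to prove the two directions separately. The $(\Leftarrow)$ direction is a short computation: if every $a \in A$ satisfies $a = a^{\tran}$, then for $a,b \in A$ the product $ab$ lies in $A$ and is therefore also symmetric, so $ab = (ab)^{\tran} = b^{\tran} a^{\tran} = ba$, giving commutativity.

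For $(\Rightarrow)$, suppose $A$ is commutative. First I would use Theorem \ref{mainc}: the implications (i) $\Rightarrow$ (ii) $\Rightarrow$ (iii) $\Rightarrow$ (iv) hold without the idempotent or faithful hypothesis (as noted in the Remark following that theorem), so ${\rm Sym}(A)$ is an operator algebra under the original product of $A$, and it is both commutative (since $A$ is) and symmetric by construction. In the real case I would pass to $A_c$, which is again a commutative operator algebra; by Proposition \ref{ciop} we have ${\rm Sym}(A_c) = ({\rm Sym}(A))_c$, so the preceding discussion applies to $A_c$ as well. Now apply the (corrected) complex version of \cite[Corollary 2.5]{BWinv} (equivalently, our Corollary \ref{chsym}) to ${\rm Sym}(A)$ in the complex case and to ${\rm Sym}(A_c)$ in the real case; this produces a completely isometric representation as a (complex) algebra of matrices equal to their transpose relative to some fixed orthonormal basis.

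To finish, compose with the natural identity map $A \to {\rm Sym}(A)$ (or $A_c \to {\rm Sym}(A_c)$). This is an isometric algebra isomorphism because the ${\rm Sym}(\cdot)$ construction only alters the matrix norms at levels $n \geq 2$, leaving the underlying Banach space norm unchanged. In the real case, the restriction of the resulting isometric embedding of $A_c$ back to $A \subseteq A_c$ yields the desired isometric representation of $A$ as a real algebra of symmetric matrices with complex entries. The only nontrivial technical point is verifying that ${\rm Sym}(A)$ is an operator algebra for commutative $A$, and this is exactly what is supplied by the implications already recorded in Theorem \ref{mainc} and the Remark after it; the rest is a routine reduction via complexification as indicated in the text just before the statement.
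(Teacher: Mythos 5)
Your proof is correct and follows essentially the same route as the paper: the easy direction is the transpose computation, and the forward direction is the complexification-plus-${\rm Sym}$ argument, where the paper simply cites the complex case from \cite{BWinv} applied to $A_c$ while you unpack that complex case using the implications (i)$\Rightarrow$(iv) of Theorem \ref{mainc} (valid without the idempotent/faithful hypotheses) together with the corrected \cite[Corollary 2.5]{BWinv} (Corollary \ref{chsym}). The observation that the identity $A \to {\rm Sym}(A)$ is isometric at the first matrix level, so one only gets an isometric (not completely isometric) representation, is exactly the point of the statement and is handled correctly.
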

 
 \begin{corollary} \label{chsym3}  The algebra generated by any operator on a real Hilbert space is
 isometrically isomorphic to the real algebra generated by a complex symmetric operator 
on a complex Hilbert space $K$. That is,  isometrically isomorphic to the real operator algebra generated by an operator on $K$ whose matrix (representation) with respect to some fixed orthonormal basis is a 
symmetric matrix. 
\end{corollary}

{\bf Remarks.} 1)\ In the real case of the results above involving complex symmetric matrices, it is not always possible to use real symmetric matrices.
An example of this is $\bC$ viewed as a real $C^*$-algebra. 

\smallskip

2)\ The real case of several other results in \cite{BWinv} should follow similarly by
using some of the ideas above.

\begin{proposition} \label{wiff} A  symmetric  (resp.\ reversible) operator algebra $A$ is commutative  if and only if $A^1$ is symmetric  (resp.\ reversible).  \end{proposition}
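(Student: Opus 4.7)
The plan is to reduce both implications to the already established unital case: that a unital reversible (hence a unital symmetric) operator algebra is commutative, as proved in \cite{BComm} and recalled in the introduction. Commutativity of $A$ is not in the hypothesis; we merely assume $A$ is symmetric (resp.\ reversible), and the claim is equivalent to ``$A$ is commutative iff $A^1$ retains the same property.'' So I would first observe that by Corollary \ref{comm} (and Proposition \ref{p1}) the symmetric case formally implies the reversible case, so it is enough to handle both directions for ``reversible'' and then remark that the argument for ``symmetric'' is parallel (in one direction using the implication symmetric $\Rightarrow$ reversible, in the other using that commutative unital operator algebras are symmetric).

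For the $\Leftarrow$ direction: assume $A^1$ is reversible. Since $A^1$ is a \emph{unital} operator algebra, the unital case from \cite{BComm} applies and yields that $A^1$ is commutative, whence so is the subalgebra $A$. If instead $A^1$ is symmetric, then by Corollary \ref{comm} (or Proposition \ref{p1}) $A^1$ is reversible, and we conclude as before.

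For the $\Rightarrow$ direction: assume $A$ is commutative. Then $A^1$ is a commutative unital operator algebra. Reversibility of $A^1$ is immediate (this is the trivial implication (i) $\Rightarrow$ (ii) of Theorem \ref{mainc}, which needs no idempotent or faithfulness hypothesis). For the symmetric conclusion, note that $A^1$, being unital, is automatically idempotent, so Theorem \ref{mainc} applies to it; the implication (i) $\Rightarrow$ (v) (equivalently, the corrected \cite[Corollary 2.5]{BWinv} recalled here as Corollary \ref{chsym}) provides a completely isometric representation of $A^1$ as symmetric operators with respect to some orthonormal basis, which is exactly the statement that $A^1$ is a symmetric operator space.

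The main potential obstacle is that the operator space structure on the unitization $A^1$ is ``slippery'' (as the paper emphasizes), so one might worry that the properties of $A$ do not transfer cleanly. This concern is what makes the proposition nontrivial in principle, but it is dissolved by the observation that we never need to analyze the $A^1$-norms directly: the $\Leftarrow$ direction uses only that $A^1$ is unital (so the \cite{BComm} unital theorem fires), while the $\Rightarrow$ direction uses only that $A^1$ is commutative and unital (so Theorem \ref{mainc} fires on $A^1$ itself). Thus the argument is short and avoids any explicit computation with the unitization norms.
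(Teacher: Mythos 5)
Your $\Leftarrow$ direction is correct (reversibility or symmetry of the unital algebra $A^1$ gives commutativity of $A^1$ by the unital theorem of \cite{BComm}, hence of $A$), and your $\Rightarrow$ direction for the reversible case is correct (commutative implies reversible trivially). The genuine gap is in the $\Rightarrow$ direction for the symmetric case. You argue that since $A^1$ is a commutative unital operator algebra, Theorem \ref{mainc} (i)$\Rightarrow$(v), ``equivalently'' Corollary \ref{chsym}, gives a \emph{completely} isometric representation of $A^1$ by symmetric operators, hence $A^1$ is a symmetric operator space. But condition (v) of Theorem \ref{mainc} is only an \emph{isometric} statement and says nothing about matrix norms, while Corollary \ref{chsym} characterizes algebras that are commutative \emph{and symmetric} --- symmetry is part of its hypothesis, so invoking it for $A^1$ is circular. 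Indeed ``commutative $\Rightarrow$ symmetric'' is false: take $A = {\rm span}\{e_{11}, e_{12}\} \subseteq M_2$ (row Hilbert space) with the zero product; this is a commutative operator algebra that is not a symmetric operator space, and its unitization is a commutative unital operator algebra that is not symmetric (subspaces inherit the symmetry condition $\|[x_{ij}]\| = \|[x_{ji}]\|$, so if $A^1$ were symmetric so would be $A$). The tell-tale sign is that your argument never uses the hypothesis that $A$ itself is symmetric, which is exactly what must propagate to $A^1$.

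To repair it, use both hypotheses on $A$. Since $A$ is commutative, the canonical map $a \mapsto a^\circ : A \to A^\circ$ is an algebra homomorphism (reversing a commutative product changes nothing), and since $A$ is symmetric this map is a complete isometry; by Meyer's theorem on uniqueness of the unitization it extends to a unital completely isometric isomorphism $A^1 \to (A^\circ)^1 \cong (A^1)^\circ$, and this extension is precisely the canonical map $x \mapsto x^\circ$ on $A^1$, so $A^1$ is symmetric. Equivalently, apply Corollary \ref{chsym} to $A$ itself (legitimate, since $A$ is commutative and symmetric) to obtain a completely isometric representation on a complex Hilbert space $H$ and a conjugation $c$ with $cac = a^*$ for $a \in A$; then $A^1 = A + \Fdb 1_H$ (by uniqueness of the unitization) satisfies $cxc = x^*$ for all $x \in A^1$, so $A^1$ is symmetric by Theorem \ref{thmsym}. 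With this replacement the rest of your reduction to the unital case stands.
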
  

  \begin{proof}  Since unital reversible operator algebras are commutative the one direction is immediate. 
  Conversely if $A$ is commutative  and symmetric then $A^1$ is  symmetric  by e.g.\ Corollary \ref{chsym}. \end{proof} 

The last result is a necessary and sufficient condition for commutativity of $A$.  We recall that
the unitization $A^1$ is unique up to 
        completely isometric isomorphism.   So in some senses this is the end of the story.
However 
in a given example one may not have a good handle on the operator space structure of $A^1$, indeed the matrix norms 
on the added dimension are usually subtle.  Hence we continue to seek conditions for commutativity only involving the 
operator space structure of $A$. 

\section{A `standard position', and three-commuting algebras}  \label{sp}

\begin{lemma} \label{both} Suppose that $A$ is an operator algebra and that $I(A)$ is represented as a subTRO of $B(H)$
(we are not requiring $A$ to be a subalgebra of $B(H)$).  Then 
we may represent $A$ completely isometrically and as a subalgebra of $B(H^{(2)})$ with a copy  of 
$I(A)$ in $B(H^{(2)})$ represented as a subTRO  and  subalgebra of $B(H^{(2)})$. \end{lemma}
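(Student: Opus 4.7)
The plan is to apply the Kaneda--Paulsen description (Theorem \ref{KP}): realizing $I(A)$ as a subTRO of $B(H)$, the given product on $A$ has the form $x \cdot y = x z^* y$ for some contraction $z \in I(A)$. Since $\|z\| \leq 1$, the defect operator
\[
D \;:=\; (I_H - z^* z)^{1/2} \;\in\; B(H)
\]
is defined by continuous functional calculus and satisfies the crucial identity $z^* z + D^2 = I_H$.

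I would then propose the map $\Phi : I(A) \to M_2(B(H)) = B(H^{(2)})$ given by
\[
\Phi(x) \;=\; \begin{bmatrix} x z^* & x D \\ 0 & 0 \end{bmatrix}.
\]
It is convenient to package this as $\Phi(x) = e\, x\, V^*$, where $V := \begin{bmatrix} z \\ D \end{bmatrix} \in B(H, H^{(2)})$ is an isometry (since $V^* V = z^*z + D^2 = I_H$), and $e : H \to H^{(2)}$, $h \mapsto (h,0)$, is the inclusion, also an isometry with $e^* e = I_H$. One then immediately checks the handy identity $V^* e = z^*$ in $B(H)$.

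With this packaging the required verifications are short. For complete isometry, the $n$-th amplification factors as $\Phi^{(n)}([x_{ij}]) = e^{(n)} [x_{ij}] (V^*)^{(n)}$, and the block-diagonal amplifications of $e$ and $V$ remain isometries, so matrix norms are preserved. For algebra closure,
\[
\Phi(x)\Phi(y) \;=\; e\, x\, (V^* e)\, y\, V^* \;=\; e\, (x z^* y)\, V^* \;=\; \Phi(x z^* y),
\]
and $x z^* y$ lies in $I(A)$ (in fact in $A$ when $x, y \in A$), so $\Phi(I(A))$ is a subalgebra of $B(H^{(2)})$ and $\Phi|_A$ is a completely isometric algebra homomorphism. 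For TRO closure, using $V^* V = I_H$ and $e^* e = I_H$,
\[
\Phi(x) \Phi(y)^* \Phi(w) \;=\; e\, x\, V^* V\, y^*\, e^* e\, w\, V^* \;=\; \Phi(x y^* w),
\]
and $x y^* w \in I(A)$, so $\Phi(I(A))$ is a subTRO of $B(H^{(2)})$ and $\Phi$ is a (completely isometric) TRO isomorphism onto its image.

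The only mildly subtle point, and the reason the construction must leave $H$ for $H^{(2)}$, is that the correction term $x D$ is an element of $B(H)$ which in general is \emph{not} in $I(A)$; packaging the embedding in $M_2(B(H))$ is precisely what allows this. The entire argument hinges on the single identity $z^* z + D^2 = I_H$, which simultaneously delivers the isometric property (via $V^* V = I$) and the cancellation collapsing $\Phi(x) \Phi(y)^*$ into $e\, x y^*\, e^*$, keeping the image closed under both the TRO and multiplicative structures inherited from the ambient $B(H^{(2)})$.
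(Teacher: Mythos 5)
Your proof is correct and is essentially the paper's own argument: you construct exactly the same map $\Phi(x) = \begin{bmatrix} xz^* & x(1-z^*z)^{1/2} \\ 0 & 0 \end{bmatrix}$ built from the Kaneda--Paulsen element $z$. The only cosmetic difference is your factorization $\Phi(x) = e\,x\,V^*$, which lets you check complete isometry directly, whereas the paper deduces it from the fact that $\Phi$ is a faithful ternary morphism; both verifications are fine.
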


\begin{proof}  Let $Z = I(A)$, a subTRO of $B(H)$.  By Theorem \ref{KP} there exists unique $z \in {\rm Ball}(Z)$ with
the product of $A$ being $x z^* y$. 
Define $\Phi : Z \to B(H^{(2)}) = M_2(B(H))$ to have first row $x z^*$ and $x(1-z^*z)^{\frac{1}{2}}$, and other row $0$.
Note that $\Phi(x) \Phi(y)^*$ has 1-1 entry $xy^*$ for $x,y \in Z$, and other entries 0.  Then $\Phi$ is a faithful, so  completely isometric, ternary morphism.  So the copy $\Phi(I(A))$  in $B(H^{(2)})$ is a subTRO. 
It is easy to check that $\Phi(x) \Phi(y) = \Phi(xz^* y)$ and also $\Phi(x) \Phi(y) =  \Phi(x) \Phi(z)^* \Phi(y)$, for $x,y \in Z$.
\end{proof}

{\bf Remark.}  If $K = H^{(2)}$   then in the copy of $Z$ in  $B(K)$ we have $xy = x z^* y, x, y \in Z$.   
Here $z$ is the image under $\Phi$ of the $z$ in the last proof.  Thus $B(K)$ has two  operator algebra 
products, $xy$ and  $x z^* y$, and these  coincide on $Z$ and on $A$.

\bigskip

If $A$ is  represented completely isometrically and as a subalgebra of $B(K)$, with a copy of 
$I(A)$ in $B(K)$ represented as a subTRO and subalgebra $Z$ of $B(K)$, then we will say that $A$ is in {\em standard position}.

The following associates canonical commutative operator algebras with any reversible operator algebra $A$, and shows that $A$ itself is at least 3-commutative. 

\begin{theorem} \label{both2} Suppose that $A$ is a reversible operator algebra.   
 \begin{itemize} 
\item[(1)]  We may represent 
$A$ completely isometrically and as a subalgebra of $B(K)$, with a copy of 
$I(A)$ in $B(K)$ represented as a subTRO and subalgebra $Z$ of $B(K)$, such that there exists unique $w, z \in {\rm Ball}(Z)$ with
$yx = y z^*x = x w^* y$ for $x, y \in A$ (all operations here the natural ones in $B(K)$).   
\item[(2)]    $Az^*, Aw^*, z^* A, w^* A$ are commutative operator algebras.
\item[(3)]  Viewed completely isometrically as commutative subalgebras $$L(Az^*), L(Aw^*), R(z^* A), R(w^* A)$$
of $CB(A)$, all of these algebras mutually commute, and we have $L(Az^*) = R(w^* A)$ and $L(Aw^*) = R(z^* A).$
\item[(4)]  $A$ is 3-commutative:\ any  product of three or more elements of $A$ commutes.   
\item[(5)]  If $n \geq 3$ and $a_1, \cdots , a_n \in A$, then in any  product 
$a_1 z^* a_2 z^* \cdots z^* a_n$,  any selection of the $z$'s may be changed to $w$'s without changing the 
value of the product.
\item[(6)]  $A$ is commutative if and only if $z = w$ in {\rm (1)}. 
 \end{itemize} 
\end{theorem}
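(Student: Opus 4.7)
To prove this theorem, I would place $A$ into standard position with both the forward and reversed Kaneda--Paulsen elements, then extract a single algebraic identity and run a short calculation to get 3-commutativity (4), and deduce the rest.

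For (1), I would apply Theorem \ref{KP} twice: once to $A$, yielding a unique $z \in {\rm Ball}(I(A))$ with $xy = xz^*y$; and once to the reversible algebra $A^{\rm rev}$, yielding $w \in {\rm Ball}(I(A^{\rm rev}))$ with $yx = xw^*y$. Since $I(A) = I(A^{\rm rev})$ as operator spaces and the injective envelope carries a canonical TRO structure, both $z$ and $w$ live in a common TRO $Z = I(A)$. Lemma \ref{both} then produces an embedding of $Z$ into $B(K)$ as a simultaneous subTRO and subalgebra; because the embedding is a ternary morphism, both relations $xy = xz^*y$ and $yx = xw^*y$, valid abstractly in $Z$, transfer to $B(K)$.

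Combining $yx = yz^*x$ (the original product with variables swapped) with $yx = xw^*y$ yields the master identity $xw^*y = yz^*x$ for $x, y \in A$. For (4), I would evaluate $aw^*bz^*c$ ($a,b,c \in A$) two ways. Grouping to the left using $aw^*b = ba$ gives $(aw^*b)z^*c = bac$; grouping to the right, using the master identity $aw^*y = yz^*a$ with $y = bz^*c = bc$, gives $aw^*(bz^*c) = (bc)z^*a = bca$. Equating, $bac = bca$ for all $a,b,c \in A$, i.e., $A \cdot [A,A] = 0$. A symmetric evaluation of $az^*bw^*c$ (or equivalently, applying the same argument to $A^{\rm rev}$ with $z$ and $w$ interchanged) gives $[A,A] \cdot A = 0$. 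The two annihilation identities realize both adjacent transpositions $abc = bac$ and $abc = acb$, which generate every permutation of three factors; this establishes (4).

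For (3), a direct computation using the master identity identifies $L(az^*) = R(w^*a) = L_a$ and $L(aw^*) = R(z^*a) = R_a$ as operators on $A$, and the required commutation relations reduce to associativity together with (4). For (5), each replacement of a $z^*$ by a $w^*$ inside $a_1 z^* a_2 z^* \cdots z^* a_n$ transforms the adjacent block $a_i w^* a_{i+1}$ into $a_{i+1} z^* a_i$ via the master identity, an adjacent transposition of the $a$'s; by (4) this leaves the product unchanged for $n \ge 3$. For (6), commutativity of $A$ forces $xz^*y = xy = yx = xw^*y$ for all $x, y \in A$, and the uniqueness in Theorem \ref{KP} then gives $z = w$ (with the converse immediate). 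The main obstacle I anticipate is the spatial strengthening needed for (2): commutativity of $Az^*$ as a subalgebra of $B(K)$ amounts to $[a,b] z^* = 0$, strictly stronger than the algebraic $[A,A] A = 0$ coming from (4); closing this gap requires exploiting rigidity or essentiality of the standard-position embedding of $A$ inside $Z = I(A)$, which is the technical analysis of the injective envelope that the introduction flags as the price of working in the nonunital setting.
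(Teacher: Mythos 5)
Most of your argument is sound, and on parts (1), (4), (5), (6) it is essentially correct while taking a genuinely different route from the paper. You derive the master identity $xw^*y=yz^*x$ from Theorem \ref{KP} applied to both products and transfer it to $B(K)$ via Lemma \ref{both} (the paper does the same two steps in the opposite order, which changes nothing), and then you prove 3-commutativity \emph{directly}: evaluating $aw^*bz^*c$ and $az^*bw^*c$ with the two groupings gives $A[A,A]=0$ and $[A,A]A=0$, whence all adjacent transpositions in products of length $\geq 3$. The paper instead proves (2) first and deduces (4) from (2)--(3); your order is legitimate and in some ways cleaner, since (4)--(6) then need nothing beyond (1) and associativity of the operator products in standard position. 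Your treatment of (5) and (6) matches the paper's.

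The genuine gap is the one you flag yourself: part (2) is not proved. Commutativity of $Az^*$ inside $B(K)$ is exactly $[a,b]z^*=0$, and your computation only yields $([a,b]z^*)c=0$ for all $c\in A$, i.e.\ $az^*bz^*c=bz^*az^*c$. The missing step is not ``rigidity or essentiality of the unitization'' but precisely the nondegeneracy of the injective (triple) envelope, which is what standard position was set up to make available: the element $az^*bz^*-bz^*az^*$ lies in $ZZ^*$ for $Z=I(A)$ realized as a subTRO of $B(K)$ containing $A$, and an element of this kind that annihilates $A$ must vanish, by \cite[Proposition 4.4.12]{BLM}. One application of that proposition turns your identity $([a,b]z^*)A=0$ into $[a,b]z^*=0$, and the same cancellation handles $Aw^*$, $z^*A$ and $w^*A$; this is exactly how the paper argues. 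A smaller omission of the same flavor occurs in (3): that $L(Az^*)$, etc., sit \emph{completely isometrically} in $CB(A)$ is not a consequence of your master identity but of the identification of the operator space multiplier algebras $\cM_\ell(A)$ and $\cM_r(A)$ with multipliers coming from the injective envelope; your purely algebraic identification $L(az^*)=R(w^*a)$ and the commutation relations are fine, but the isometric statement again needs the envelope machinery rather than associativity plus (4).
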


\begin{proof}  (1)\ Let $n(x,y) = yx$.  Then $(A,n)$ is completely isometrically isomorphic to an operator algebra. By Lemma \ref{both}
we may represent 
$A$ completely isometrically and as a subalgebra of $B(K)$, with a copy of 
$I(A)$ in $B(K)$ represented as a subTRO $Z$ of $B(K)$, such that there exists unique $z \in {\rm Ball}(Z)$ with
$xy = x z^*y$ (all operations here the natural ones in $B(K)$).   We also have $y z^* x = x w^* y$ in $Z$ 
for a unique $w \in {\rm Ball}(Z)$ and all $x, y \in A$, by Theorem \ref{KP} 
applied to the product $n(x,y) = yx = y z^* x$ on $A$.   

(2)\ Note that $x z^* yz^* a = x z^* (a w^* y) = (x z^* a) w^* y = y z^* x z^* a$ for $a,x,y \in A$.   By \cite[Proposition 4.4.12]{BLM}  we have 
 $x z^* yz^* = y z^* x z^*$.   So $Az^*$ is a  commutative operator algebra.  Similarly $Aw^*, z^* A, w^* A$ are
 commutative.   
 
 (3)\ Viewing $Az^*$ in the operator space left multiplier algebra $\cM_\ell(A) \subseteq CB(A)$ (see \cite[Chapter 4]{BLM}) completely isometrically, and $w^* A$ in the left multiplier algebra
$\cM_r(A)$ it is clear that $L(Az^*) = R(w^* A)$, and they commute.   So $Az^*$ and $w^*A$ are completely isometrically isomorphic.
Similarly for $Aw^*$ and $R(z^* A)$.  
 
(4)\ It follows that in a product $a_1 z^* a_2 z^* \cdots z^* a_n$ we can 
 move the terms $a_k z^*$ around.   Also, this product equals $$a_2 z^* \cdots z^* a_n w^* a_1 = a_2 z^* \cdots z^* a_1 z^* a_n
 = a_2 z^* \cdots z^* a_n z^* a_1.$$ Thus any  product of three or more elements of $A$ commutes.

(5)\ This  follows from the above, since for example any $z^* a_k z^* a_j = z^* a_j z^* a_k = z^* a_k w^* a_k$. 

(6)\ One direction of this is obvious.
If $A$ is commutative 
then (1) implies that $x z^*y = x w^* y$ for $x, y \in A$.  By the uniqueness in (1), this implies $z=w$.
\end{proof} 

{\bf Remarks.} 1) For an operator algebra  $A$, being  left faithful is equivalent to $x \mapsto xz^*$ being one-to-one (note that this map is a homomorphism from $A$ into the operator algebra $Az^*$).  Indeed $aA = 0$ if and only if $xz^* A = 0$, and so if and only if $xz^* = 0$.  Similar statements apply to `right faithful'.

\medskip

2)\ Not every  3-commutative operator algebra is reversible. E.g.\ the strictly upper triangular $3 \times 3$ matrices is 3-commutative.
However it can be shown to be not reversible (e.g.\ using Theorem \ref{both2}
and explicitly showing that there can exist no 
$w$ there, as explained below Theorem \ref{BRSKP}).   Indeed one can show that all reversible subalgebras of $M_3$ are commutative.  Questions like these could make interesting undergraduate research projects.   

\medskip

As said earlier, there are interesting links to triangular operator algebras in the sense of Radjavi and Rosenthal \cite{RR} and others. 

\begin{theorem} \label{Burn} Every 3-commuting complex matrix algebra $A$ is  triangularizable.   That is, $A$  is unitarily equivalent to an algebra of upper triangular matrices. \end{theorem}  

\begin{proof}   We first prove 
that  $A \subseteq M_n$  has a common  eigenvector.
By Burnside's theorem \cite{Lam}
there is a nontrivial invariant subspace $K$.    
If $A_{|K}$ is $(0)$ then $A$ has a  common eigenvector in $K$ (any nonzero vector in $K$).
  If $A_{|K}$ is a full matrix algebra $M_m$ then the product of three elements cannot be commutative.
  So $A_{|K}$ is contained properly in a full matrix algebra.   We can repeat this procedure until we obtain a nontrivial invariant subspace
$K$ with $A_{|K} = (0)$ or  $K$ is one dimensional.  In both cases we have a common  eigenvector for $A$. 

Let $\zeta_1$ be a normalized common eigenvector, and $K = H \ominus \{ \zeta_1 \}$. Let $B = B(H)$.  By Sarason's lemma (see e.g.\ \cite[Proposition 3.2.2]{BLM}, in the real case the same easy proof appears to work) 
$R = P_K A _{|K}$ is a 3-commuting subalgebra of $B(K) \cong p B(H) p$.   
Indeed $P_K a P_K b P_K c_{|K}  = P_K abc_{|K}$.   Since $A$ is 3-commuting we can interchange $a,b$ and $c$ everywhere in the last equation without change.   By the argument above 
$R$ has  a normalized common eigenvector.  
Continuing in this way we see that $A$ is  upper triangular.
\end{proof}

\section{Counterexamples} \label{kegs}  

\begin{proposition} \label{acre} An anticommuting operator algebra is reversible.  If $B$ is a commutative operator algebra and 
$C$ is an anticommuting operator algebra, then any subalgebra of $B \oplus^\infty C$  is reversible. 
\end{proposition}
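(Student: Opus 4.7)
The plan is to split the statement into its two assertions, with the second following immediately from the first together with the closure properties recorded in Proposition \ref{classof}.

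For the first assertion, let $C$ be an anticommuting operator algebra, so $xy = -yx$ for all $x,y \in C$. I would define $\phi : C^{\rm rev} \to C$ by $\phi(x) = -x$. Since $C^{\rm rev}$ has, by definition, the same underlying operator space structure as $C$, the map $\phi$ is manifestly completely isometric. To verify the homomorphism property, writing $\cdot_{\rm rev}$ for the reversed product, I would compute
\[ \phi(x \cdot_{\rm rev} y) \; = \; \phi(yx) \; = \; -yx \; = \; xy \; = \; (-x)(-y) \; = \; \phi(x)\phi(y), \]
where the anticommutation relation is used at the third equality. Hence $\phi$ is a completely isometric algebra isomorphism from $C^{\rm rev}$ onto the (concrete) operator algebra $C$, so $C^{\rm rev}$ is completely isometrically an operator algebra, that is, $C$ is reversible.

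For the second assertion, $B$ is reversible since commutative algebras are trivially reversible, and $C$ is reversible by what has just been proved. Proposition \ref{classof} asserts that the class of reversible operator algebras is closed under $\oplus^\infty$ direct sums and under subalgebras. Applying the first closure property, $B \oplus^\infty C$ is reversible; applying the second, every subalgebra of $B \oplus^\infty C$ is reversible.

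The entire content of the proof is the identification of the negation map as the desired completely isometric isomorphism in the anticommuting case; there is no real obstacle, as this is a one-line check using the defining anticommutation relation. All remaining work is simply quoting the closure properties of the class of reversible operator algebras from Proposition \ref{classof}.
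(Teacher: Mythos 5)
Your proof is correct. The second assertion is handled exactly as in the paper: commutative algebras are trivially reversible, the anticommuting case is covered by the first assertion, and then you quote the closure of the reversible class under $\oplus^\infty$ direct sums and subalgebras from Proposition \ref{classof}. For the first assertion your mechanism differs slightly from the paper's. The paper invokes Theorem \ref{BRSKP} with $z = -I$: since $yx = -xy = x(-I)^*y$, the reversed product is of the form $x z^* y$ for a contraction $z$ in a containing unital $C^*$-algebra, hence is an operator algebra product. You instead transport the structure directly, observing that negation $\phi(x) = -x$ is a completely isometric surjective homomorphism from $C^{\rm rev}$ onto $C$ (the check $\phi(yx) = -yx = xy = \phi(x)\phi(y)$ using anticommutativity is exactly where the hypothesis enters, and the map is a complete isometry for the unchanged matrix norms, in both the real and complex cases). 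Both arguments hinge on the same identity $yx = -xy$; yours is more elementary and self-contained, needing no appeal to the \cite{BRS} machinery or to a containing unital $C^*$-algebra, while the paper's phrasing via an element $z$ with product $xz^*y$ fits the framework used throughout (e.g.\ in Example \ref{Ex1}, where the elements $z$ and $w=-z$ of Theorem \ref{both2} are identified explicitly). One cosmetic remark: since $C$ is an abstract operator algebra, strictly speaking you compose $\phi$ with a completely isometric representation of $C$ to land in a concrete operator algebra, but this is immediate from the definition and does not affect the argument.
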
  
\begin{proof}   An anticommuting operator algebra is reversible by Theorem \ref{BRSKP}, taking $z = -I$ there.   The rest follows from Proposition \ref{classof}. 
\end{proof}

Our counterexamples all derive from one basic example of an anticommuting, hence reversible, noncommutative  operator algebra. 
But first we recall the canonical anticommutation relations (CAR).  Consider the closed linear span of a family of operators $(a_i)_{i \in I}$ in $B(H)$ satisfying the CAR:
$$a_i a_j + a_j a_i = 0 \; ,  \; \; \; \; a_i a_j^* + a_j^* a_i = \delta_{ij} I.$$
This is a Hilbert space, and an operator space, which  up to a complete isometry depends only on the cardinality of $I$.

\begin{example} \label{Ex1} Let $\Phi_2$  be the 2 dimensional Hilbertian operator space modeled on the 
canonical anticommutation relations (CAR).  We represent it as upper triangular 
matrices  in $M_4$ via the pair of partial isometries
$$U = e_{13} + e_{24}, \; \; \;  V = e_{12} - e_{34}.$$ 
   These satisfy the usual form of the canonical anticommutation relations.   So for example 
$UV = - VU, U^2 = V^2 = 0$, $U^* V = -V U^*$.   Let $W = VU = e_{14}$. 
Let $A = {\rm Span} \{ U,V,UV \}$, the subalgebra generated by $U, V$ in $M_4$.    This is an operator algebra.   In some sense we do not care about the particular representation, since it is well known that even the $C^*$-algebra generated by operators satisfying CAR is unique, hence the generated closed algebra will be unique up to completely isometric isomorphism.    However this particular representation is very convenient, and is upper triangular which makes several things more transparent. 

We have 
$$(\alpha U + \beta V + \gamma UV) \, (\alpha' U + \beta' V + \gamma' UV) = 
(\alpha \beta' - \beta \alpha') \, UV = (\alpha \beta' - \beta \alpha') \, W, $$
for scalars $\alpha, \alpha', \beta,  \beta,  \gamma, \gamma'$.   Thus $xy = -yx$ for $x, y \in A$.  That is, $A$ is an anticommuting operator algebra.  By e.g.\ Proposition \ref{acre}, or by Theorem \ref{BRSKP}, we see that  $A$ is reversible.   In fact  $A$ is a symmetric operator algebra: indeed 
there is a $*$-automorphism $\theta$ of $M_4$ such that $\theta(x) = -x^T$ for all $x \in A$.  (Namely conjugation 
$\theta(x) = w^* xw$ by the unitary $w = e_{41} + e_{32}
- e_{23} - e_{14}$.)   Thus $$\| [x_{ji} ] \| = \| [ \theta(x_{ij}) ] \| = \| [ x_{ij} ] \| , \qquad x_{ij} \in A .$$  However  of course $A$ is not commutative: $UV \neq VU$.

We compute the injective envelope $I(A)$, in part because we shall need the details below elsewhere.   Let $Z$ be the TRO generated by $A$ in $M_4$, the span of products of the form $a_1 a_2^* a_3 a_4^* \cdots a_n$ for $a_k \in A.$ 
Consider the projections $p_1 = UU^*, p_2 = VV^*, q_1 = p_1^\perp = U^*U, q_2 = p_2^\perp = V^*V$, and set
$p = p_1 \vee p_2   = I_3 \oplus 0 \in Z Z^*$ and $q = q_1 \vee q_2  = 0 \oplus I_3 \in Z^*Z$.   
Note that $WW^*, V UU^*$ and $VUV^*$ are scalar multiples of $e_{11}, e_{12}$ and $e_{13}$ in $A A^*$.   From this 
it is an easy exercise that $C^*(A A^*) = p M_4 p$ and $Z = p M_4 q$. 
 Thus $p M_4 q \cong M_3$ completely isometrically
(ternary) isomorphically.   Indeed $p$ and $q$ commute, so $pq$ is a projection, indeed $pq = 0 \oplus I_{2} \oplus 0 \in M_4$. 
 
 Since $p M_4 q$ is a `simple TRO' (or equivalently, $p M_4 p$ is simple), it follows that the ternary envelope of $A$ is $Z = p M_4 q$. 
 This is also the injective envelope since it is injective.    That is, $I(A) = p M_4 q$. 
 Note that $I(A)  \cong M_3$ completely isometrically (ternary) isomorphically.  Indeed 
$$Z = I(A) = \begin{bmatrix} 0 & * & * & *  \\ 0 &   * & * & * \\ 0 &   * & * & * \\ 0 &   0 & 0 & 0  \end{bmatrix}
 \subset M_4.$$
(It is interesting that  $A$ cannot be represented completely isometrically as a subalgebra of $M_3$, since that would imply that $A$ is commutative by Remark 2 after Theorem \ref{both2}.  We also remark that the $C^*$-algebra generated by $A$ can be shown to be  all of $M_4$, but this will not be used.) 

We can also identify the elements $z,w$ from Theorem \ref{both2} in this case.  Indeed $P(x) = pxq$ is the canonical projection onto $Z = I(A)$,
and so $z = P(1) = pq$.  And $w = -z = -pq$ in this example.  So $z \neq w$. 

Finally we remark that all the matrices considered in this example have real entries, and so the example has the same conclusions 
in the real case. 
We conjecture that up to unitary equivalence $A$ is the only noncommutative reversible  $4 \times 4$ matrix algebra.  Again this may be a nice undergraduate research project. 
\end{example}

\begin{corollary}  \label{uni}  If $A$ is a symmetric operator algebra  then its unitization $A^1$ need not be symmetric.
If  $A$ is a reversible operator algebra  then $A^1$ need not be a reversible operator algebra.
 \end{corollary}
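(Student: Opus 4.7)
The plan is to invoke the algebra $A$ constructed in Example \ref{Ex1} together with Proposition \ref{wiff}. Example \ref{Ex1} already produces an operator algebra $A \subseteq M_4$ which is symmetric (witnessed explicitly by the $*$-automorphism $\theta(x) = w^* x w$ of $M_4$ satisfying $\theta(x) = -x^\tran$ for $x \in A$), hence in particular reversible by Corollary \ref{comm}, yet manifestly noncommutative since $UV \neq VU$. So the example delivers simultaneously a symmetric noncommutative operator algebra and a reversible noncommutative operator algebra.

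Now I would simply apply Proposition \ref{wiff}: since that proposition asserts that a symmetric (resp. reversible) operator algebra $A$ is commutative if and only if $A^1$ is symmetric (resp. reversible), and our $A$ is not commutative, the contrapositive immediately yields that $A^1$ is not symmetric (resp. not reversible). This gives both assertions of the corollary at once, using the single example.

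The only potential obstacle is whether one wants an argument \emph{independent} of Proposition \ref{wiff}, showing directly that $A^1$ fails to be symmetric or reversible for this specific $A$. Such a direct verification would be substantially harder, since as noted in the remarks after Theorem \ref{KP}, the operator space structure on $A^1$ is ``slippery'' and has no convenient formula. One approach would be to use Theorem \ref{BRSKP}: embed $A^1$ in some TRO or $C^*$-algebra containing its injective envelope, and attempt to show that no element $z'$ in the unit ball of the TRO can implement the reversed product $y \cdot x$ via $x (z')^* y$ on $A^1$. However, since Proposition \ref{wiff} is already available in the text, the efficient route is the short contrapositive argument above, and that is what I would present.
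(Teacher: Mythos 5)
Your proposal is correct and follows essentially the same route as the paper: the paper's own proof consists precisely of citing Example \ref{Ex1} (a noncommutative symmetric, hence reversible, operator algebra) and applying Proposition \ref{wiff} in the contrapositive. No further verification is needed.
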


\begin{proof}  This follows from the example and 
Proposition \ref{wiff}. 
 \end{proof}

To increase the dimension of our counterexample a first thought might be to 
generalize our basic example to $\Phi_n$, the $n$th Hilbertian operator space modeled on the canonical anticommutation relations (CAR).  Note that  $\Phi_n$ is symmetric. In other words if one considers $A$, the algebra generated by $\Phi_n$ inside the CAR $C^*$-algebra
$C^*(\Phi_n)$, represented in $M_{N}$ say, 
then is $A$ with 
reversed multiplication an operator algebra?   Is $A$ a symmetric  operator algebra?   
Is there an automorphism $\theta$ of $C^*(\Phi_n)$ such that  $$\| [x_{ji} ] \| = \| [ \theta(x_{ij}) ] \| = \| [ x_{ij} ] \| , \qquad x_{ij} \in A .$$
It can be  checked by a direct computation (which we omit) that the answers to all these questions are in the negative.   

One may however fix some of the issues in the last paragraph, while increasing the dimension,  by considering a 
direct sum of several copies of the algebra in Example \ref{Ex1}.  However it is more interesting to consider other known examples of families of anticommuting matrices (see e.g.\ \cite{H}):

\begin{example} In $M_{2n+2}$ consider matrices  $u_1, \cdots u_{2n}$ defined by
$$u_{2i-1} = \begin{bmatrix} 0 & \vec w_i^\tran & 0 \\
 0 &   0 & \vec v_i  \\ 0 &    0 &   0  \end{bmatrix} , \; \; 
 u_{2i} = \begin{bmatrix} 0 & \vec c_i^\tran & 0 \\
 0 &   0 & \vec d_i  \\ 0 &    0 &   0  \end{bmatrix} , \qquad i = 1, \cdots , n ,$$ 
 where $$\vec w_i = \vec e_i \otimes \vec e_1 , \; \; \vec v_i = \vec e_i \otimes \vec e_2 , \; \;  \vec c_i = \vec e_i \otimes \vec e_2 , \; \; 
 \vec d_i = - \vec e_i \otimes \vec e_1 .$$  The latter are all column vectors in $\bC^n \otimes \bC^2$.  The $(u_i)$ anticommute, in particular 
 $u_{2i-1} u_{2 i} = - u_{2 i} u_{2i-1}$ for all $i = 1, \cdots , n$, and this is a scalar multiple of $w = e_{1,(2n+2)}$, while  $u_i u_j = 0$ if 
 $|i-j| > 1$.  
 The algebra $A$ generated by the $(u_i)$ is $$A = {\rm Span} \{ w , u_1, \cdots u_{2n} \}.$$ 
 As in Example \ref{Ex1} one checks that $xy = -yx$ for $x, y \in A$.  That is, $A$ is an anticommuting operator algebra, so again
  $A$ is reversible but $A$ is not commutative.

We compute the injective envelope $I(A)$.   Let $Z$ be the TRO generated by $A$ in $M_{2n+2}$, the span of 
products of the form $a_1 a_2^* a_3 a_4^* \cdots a_n$ for $a_k \in A.$ 
Consider the projections $p_i = u_i u_i^*$ and $w w^* = e_{11}$ we see that $e_{ii} \in Z Z^*$ for all $i = 1, \cdots, 2n+1$. 
So $p = \vee_i \, p_i   = I_{2n+1} \oplus 0 \in Z Z^*$.     Similarly on the right as in Example \ref{Ex1} we obtain
$q = 0 \oplus I_{2n+1}$. 
Note that $ww^*, w u_i^*$ are scalar multiples of $e_{11}, e_{12}, \cdots , e_{1, (2n+1)}$ in $A A^*$.   From this 
it is an easy exercise as in Example \ref{Ex1}  that $C^*(A A^*) = p M_{2n+2} p \cong M_{2n+1}$ and $Z = p M_{2n+2} q \subset M_{2n+2}$. 
 Thus $Z = C^*(A A^*)  A \cong M_{2n+1}$ completely isometrically
(ternary) isomorphically.   Again $p$ and $q$ commute, so $pq$ is a projection, indeed $pq = 0 \oplus I_{2n} \oplus 0 \in M_{2n+2}$. 
 
 Again by simplicity as in Example \ref{Ex1}, it follows that the ternary and injective envelope of $A$ is $Z$, and 
 we can identify the elements $z,w$ from Theorem \ref{both2} in this case again as
 $z  = pq$ and $w = -z = -pq$.    \end{example}

  {\bf Remark.} Playing with such examples yields many interesting questions, some of them very suitable for undergraduate research projects.   For instance, proving that $A$ in the last example  is a symmetric operator algebra.  Or one could make many variants of the example.  For example in $M_{n+2}$ 
 consider matrices $u_1, \cdots u_{n}$ defined by
$$u_{i} = \begin{bmatrix} 0 & \vec e_i^\tran & 0 \\
 0 &   0 & \vec v_i  \\ 0 &    0 &   0  \end{bmatrix} , \; \; \; \;  \vec v_k = (-1)^{k+1} \, \vec e_{k+1},   \qquad i = 1, \cdots , n .$$ 
 The $(u_i)$ no longer anticommute.  The algebra $A$ generated by the $(u_i)$ is $A = {\rm Span} \{ e_{1,n+2} , u_1, \cdots u_{n} \}$.
 One may ask if such algebras are commutative, reversible or symmetric, etc.

\begin{example} One might conjecture in view of Proposition \ref{acre} that every reversible (resp.\ symmetric) 
operator algebra is a subalgebra of $B \oplus^\infty C$, where $B$ is a commutative and 
$C$ is an anticommuting operator algebra.   However it turns out that one can modify Example \ref{Ex1} in a nontrivial way as 
below  to obtain  a counterexample to this.

We say that $x \in A$ is a {\em strictly anticommuting element} if $xy = -yx$ for all $y \in A$, and if $xy \neq 0$ for
some $y \in A$.  One may ask if a reversible operator algebra is commutative if and only if it has no strictly 
anticommuting elements.   Again, the following example contradicts this. 

Let $N = M_m$ or $B(H)$.  Consider an isometry  $s \in N$ and the two  matrices $u = e_{12} \otimes I + e_{34} \otimes I$ and $v = e_{13} \otimes I +  e_{24} \otimes s$ in $M_4(N)$.  
We have $u^2 = v^2 = 0$. 
Let $A$ be the span of $\{ u,v,uv,vu \}$  in $M_{4}(N)$.  This is a 3-nilpotent algebra, and  
$$vu = e_{14} \otimes I , \; \; \;  uv = e_{14} \otimes s .$$ 
Consider $w = 0 \oplus r \oplus s \oplus 0$ for a contraction $r$.  Then $uwv = u (e_{24} \otimes rs) = vu$ if $rs = 1$, so that $s$ is 
an isometry (unitary if $B = M_m$), and $r = s^*$. 
Also $v w u = v (e_{34} \otimes s) = uv$. For $x, y \in A$ one can check that  $$yx = y_1 x_2 uv + y_2 x_1 vu,$$
where a $1$ (resp.\ 2) subscript denotes the coefficient of $u$ (resp.\ $v$).
Also $x w y = x_1 y_2 uwv + x_2 y_1 v w u$. 
So $yx  = x w y$.  It follows from Theorem \ref{BRSKP} that $A^{\rm rev}$ is an operator algebra.  That is, $A$ is reversible.   
It is easily checked that $A$ has no strictly anticommuting elements.  We do note however that this 
$A$ always has pairs of anticommuting elements: elements $x,y$ with $xy=-yx \neq 0$.  Indeed this holds for 
$x = u + v, y = u-v$.    It would be interesting to find an example with no such anticommuting  pair. 

In $A$ we have $\| \alpha uv + \beta vu \| = \| \beta I + \alpha s \|$.    If $s = iI$ for example, and $\alpha, \beta$ real, then this 
is $\sqrt{\alpha^2 + \beta^2}$. 
On the other hand if $A$ were a subalgebra of $B \oplus C$  as stated
then we may write $u = (u_1, u_2), v = (v_1, v_2)$.
We have $uv = (u_1 v_1 , u_2 v_2 ),$ and $vu = (u_1 v_1 , -u_2 v_2 ).$ So
$$\| \alpha uv + \beta vu \| = \| ((\alpha  +\beta)  u_1 v_1 , (\alpha  - \beta)  u_2 v_2) \| = \max \{ |\alpha  +\beta | 
\| u_1 v_1 \| , |\alpha  -\beta |  \| u_2 v_2 \| \}.$$  If $\| u_1 v_1 \| \geq  \| u_2 v_2 \|$
and $\alpha > \beta > 0$, then this is $(\alpha  +\beta)  \| u_1 v_1 \|$.  If  $\| u_1 v_1 \| <  \| u_2 v_2 \|$
and $\alpha > 0 > \beta$, then this is $|\alpha  -\beta |  \| u_2 v_2 \|$.  In either case we have a contradiction.    We leave it to the reader to check if such examples are always symmetric. 
\end{example}

  \section{Essential extensions of operator algebras} \label{sess}

  It should be interesting to develop a theory of 
 operator algebras whose injective envelope is a $C^*$-algebra $D$.   
 If  $A$ is of this type, and if $D$ can be chosen so that $A$ is a subalgebra of  $D$, then we shall say that $A$ is an {\em  essential operator algebra}.  
 This includes all unital and approximately unital operator algebras.  The reason for this notation is the following characterization. We recall that if $E \subseteq F$ are operator spaces then $F$ is an {\em essential  
 extension} of $E$ if a complete contraction 
 $u : F \to B(H)$ is completely isometric if and only if $u_{|E}$ is completely isometric.  (This should hold for all $H$ and such $u$.)  We say  that $F$ is a  {\em rigid  
 extension} of $E$ if a complete contraction 
 $u : F \to F$ is the identity map if and only if $u_{|E} = I_E$.
 
\begin{lemma} \label{ess}   Let $A$ be  an operator  algebra. Then $A$ is essential
   if and only if  the unitization of $A$  is an essential operator space extension of $A$.  
   In this case $A^1$ is also a rigid operator space extension of $A$.
   \end{lemma}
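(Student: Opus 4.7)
The plan is to handle the two implications symmetrically by transferring the essentialness property between $A^1$ and $D := I(A)$ via extensions through injectivity. For the forward direction, assume $A$ is essential, so $D$ is a (necessarily unital) $C^*$-algebra containing $A$ as a subalgebra. By Meyer's uniqueness of the operator-space structure on the unitization, the unital operator algebra $A + \mathbb{C}\, 1_D$ inside $D$ is completely isometrically isomorphic to $A^1$, so we may regard $A^1 \subseteq D$. Given any complete contraction $u : A^1 \to B(H)$ with $u_{|A}$ completely isometric, inject $B(H)$-injectivity to extend $u$ to $\tilde u : D \to B(H)$. Since $D = I(A)$ is an essential extension of $A$ and $\tilde u_{|A}$ is completely isometric, $\tilde u$ is completely isometric, hence so is $u$. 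Thus $A^1$ is essential over $A$.

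For the converse, assume $A^1$ is an essential operator space extension of $A$. Since $A^1$ is unital, the classical fact (Hamana) that the injective envelope of a unital operator algebra is a unital $C^*$-algebra containing the algebra as a unital subalgebra gives that $I(A^1)$ is a unital $C^*$-algebra containing $A^1$, and therefore $A$, as a subalgebra. I claim $I(A^1) = I(A)$. Since $I(A^1)$ is injective, it suffices by the universal characterization to show it is an essential extension of $A$. Given a complete contraction $v : I(A^1) \to B(H)$ with $v_{|A}$ completely isometric, the hypothesis that $A^1$ is essential over $A$ makes $v_{|A^1}$ completely isometric, and then the essentialness of $I(A^1)$ over $A^1$ forces $v$ itself to be completely isometric. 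Hence $I(A) = I(A^1)$ is a $C^*$-algebra containing $A$ as a subalgebra, proving $A$ is essential.

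For the rigidity statement once the equivalence is established, the inclusions $A \subseteq A^1 \subseteq I(A)$ let us handle any complete contraction $u : A^1 \to A^1$ with $u_{|A} = I_A$ by composing with $A^1 \hookrightarrow I(A)$ and extending through $B(H)$-injectivity (applied to $I(A) \hookrightarrow B(H)$) to $\tilde u : I(A) \to I(A)$ with $\tilde u_{|A} = I_A$; rigidity of $I(A)$ over $A$ gives $\tilde u = I_{I(A)}$, hence $u = I_{A^1}$. The only step that is more than routine bookkeeping with universal properties is realizing $A^1$ inside $I(A)$ in the forward direction, which is immediate from Meyer's theorem together with the fact that an injective $C^*$-algebra is automatically unital; the rest of the argument is then standard juggling of injectivity and the essential/rigid characterization of the injective envelope.
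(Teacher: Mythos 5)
Your proof is correct and follows essentially the same route as the paper's: Meyer's theorem to realize $A^1 = A + \mathbb{C}\,1_D$ inside $D = I(A)$ for the forward direction, the identification $I(A^1) = I(A)$ (injective essential extension of $A$) for the converse, and rigidity/essentiality of the injective envelope for the final assertion. The only difference is that you spell out the standard Hamana facts (the injective envelope is an essential, rigid extension, and essential extensions sit inside $I(A)$) which the paper simply cites as well-known exercises.
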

  
\begin{proof}   It is well known (and an exercise) that $F$ is an 
essential operator space extension of $E$ if and only if $F \subseteq I(E)$ (that is, there exists 
a linear complete isometry $\theta : F \to I(E)$ extending the inclusion $E \to I(E)$), and if and only if $I(F)$ is
an injective envelope of $E$.   
Thus $A^1$  is an essential operator space extension of $A$ if and only if $I(A^1) = I(A)$.  Now $I(A^1)$ may be chosen to be a  $C^*$-algebra containing $A^1$, and hence $A$, as a subalgebra.  So $A$ is essential.
Conversely if $A$ is essential, a subalgebra of $D$ as above,
then $A^1 = A + \Fdb 1_D$ by uniqueness of the unitization.  So $A^1$ is an essential operator space extension of $A$. 

It is an ancient fact (and again a pleasant exercise) that every essential extension is rigid (this is seemingly due to Isbell in the 60's in the Banach space category, and was observed by Hamana for operator spaces).   
\end{proof}

\begin{corollary} \label{essc} An essential operator algebra is commutative if and only if it is reversible.  \end{corollary}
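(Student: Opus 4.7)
The converse is immediate: any commutative algebra equals its opposite, so is trivially reversible. For the forward direction, suppose $A$ is essential and reversible. The plan is to use essentialness to place $A$ in a standard position where the Kaneda--Paulsen element for the forward product is forced to be the unit of $D := I(A)$, and then to invoke Theorem \ref{both2}(2) directly.

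By essentialness, $A$ is a subalgebra of a unital $C^*$-algebra $D = I(A)$, which we realize faithfully inside some $B(H)$. As a $C^*$-subalgebra of $B(H)$, $D$ is simultaneously a subTRO and a subalgebra of $B(H)$, so the chain $A \subseteq D \subseteq B(H)$ satisfies the definition of standard position given immediately before Theorem \ref{both2}. The product on $A$ inherited from $D$ satisfies $xy = x \cdot 1_D \cdot y$ for all $x,y \in A$, so the uniqueness clause in Theorem \ref{KP}, applied to this representation with $Z = D$, forces the KP element for $A$'s forward product to be $z = 1_D$. Reversibility of $A$ then produces (as in Theorem \ref{both2}(1)) a unique $w \in \mathrm{Ball}(D)$ with $yx = xw^*y$ in $B(H)$ for all $x,y \in A$.

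Applying Theorem \ref{both2}(2), the set $Az^*$ is a commutative operator algebra; since $z = 1_D$ this set is just $A$, and so $A$ itself is commutative.

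The one point that requires scrutiny is that the conclusion of Theorem \ref{both2}(2) really does apply to this particular standard position---the essential one---rather than only to the one explicitly constructed via Lemma \ref{both}. This is straightforward, since its proof uses only the algebraic identities from Theorem \ref{both2}(1), the 3-commutativity of Theorem \ref{both2}(4), and a general TRO-multiplier fact from \cite{BLM}, all of which remain valid in the essential representation. Indeed with $z = 1_D$ the key computation $xz^* y z^* a = y z^* x z^* a$ collapses to the 3-commutativity identity $xya = yxa$, and the cancellation from \cite{BLM} then yields $xy = yx$ in $D$, completing the proof.
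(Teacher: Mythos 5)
Your proof is correct and follows essentially the same route as the paper: both arguments hinge on identifying the Kaneda--Paulsen element as $z = 1_D$ in the essential representation and then cancelling via \cite[Proposition 4.4.12]{BLM} together with the $z$/$w$ identities of Theorem \ref{both2}. The only (harmless) difference is that you apply Theorem \ref{both2}(2) directly in the essential standard position (correctly noting its proof only needs the identities from (1) and the cancellation fact, not the representation built in Lemma \ref{both}), whereas the paper uses $z=1_D$ to deduce that $A$ is left faithful and then quotes Theorem \ref{mainc}.
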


\begin{proof}   The product on $A$ is
$ab = az^* b = a 1_D^* b$ for $a, b \in A$ in the language used before (e.g.\ in the proof of Theorem \ref{both2}). So  $z = 1_D$, by the same considerations 
used there.  Suppose that $a A = a z^* A = 0$.
Then by \cite[Proposition 4.4.12]{BLM} we have  $a z^* = 0 = a$. 
So $A$ is a faithful algebra, hence the result follows from Theorem \ref{mainc}.  \end{proof}

In the light of the last fact in the Lemma, it is natural to ask if it is rigidity rather than essentiality that is causing the commutativity in Corollary \ref{essc}.
That is, is an  operator algebra $A$ which is {\em rigid} in its unitization, commutative if and only if it is reversible?   Or even perhaps is $A$ essential (that is, $A^1$ is an essential extension of $A$) if and only if $A^1$ is a rigid extension of $A$?  
Or perhaps this is true if $A$ is finite dimensional. 
There are almost no examples in the literature, in any category, of 
rigid extensions which are not essential extensions (we said at the end of the proof of Lemma \ref{ess} that all essential extensions are automatically rigid).  
In fact the only example in the literature which we are aware of
at the present time is in a category not relevant for us, and even there it is not said explicitly. 
   The following shows that the answer to all these questions is in the negative. 

\begin{theorem}  There exists a three dimensional 
symmetric  
noncommutative matrix algebra $A$ in $M_4$ which is rigid 
in its unitization $A^1$, but for which $A^1$
is not an essential extension of $A$. 
   \end{theorem}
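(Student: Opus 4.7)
The plan is to take $A$ from Example \ref{Ex1}: the $3$-dimensional subalgebra of $M_4$ spanned by $U = e_{13} + e_{24}$, $V = e_{12} - e_{34}$, and $W = VU = e_{14}$. The $3$-dimensionality, noncommutativity, and symmetry of $A$ are already established in Example \ref{Ex1}. The non-essentiality of $A^1$ over $A$ is then immediate: were $A^1$ essential over $A$, then by Lemma \ref{ess} the algebra $A$ itself would be essential, and by Corollary \ref{essc} the reversible algebra $A$ would be commutative. Since Example \ref{Ex1} shows $A$ is not commutative, this rules out essentiality.

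For the rigidity claim, I would realize $A^1$ via Meyer's theorem as $A + \mathbb{C} I_4 \subseteq M_4$. Because $C^*(A) = M_4$ (as remarked in Example \ref{Ex1}) is simple and $M_4$ is injective, the $C^*$-envelope and injective envelope of $A^1$ both coincide with $M_4$. Given a complete contraction $u : A^1 \to A^1$ fixing $A$ pointwise, write $c := u(I_4) = \lambda I_4 + \alpha U + \beta V + \gamma W$, so that $u = \mathrm{id}_{A^1} + \chi \otimes (c - I_4)$ where $\chi : A^1 \to \mathbb{C}$ is the character killing $A$. The task reduces to forcing $c = I_4$, i.e.\ $\lambda = 1$ and $\alpha = \beta = \gamma = 0$.

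The strategy is to test complete contractivity of $u$ against a family of explicit matrices in $M_n(A^1)$ whose $M_{4n}$-norms can be evaluated in closed form from the nilpotent upper-triangular structure of $A$ and the relation $UV = -VU$. Basic column and row tests such as $(I_4, U, V, W)^\tran$ yield diagonal constraints like $|\lambda|^2 + |\alpha|^2 + |\beta|^2 + |\gamma|^2 \leq 1$ on the entries of $c^*c$ and $cc^*$, and $2 \times 2$ block tests such as $\begin{pmatrix} I_4 & aU + bV \\ cU + dV & I_4 \end{pmatrix}$ for various scalars furnish additional off-diagonal constraints. Equivalently, it suffices to show that the natural projection $\pi_A : A^1 \to A$ sending $\lambda I_4 + a$ to $a$ is not completely contractive as a map $A^1 \to A \subseteq A^1$, since $u := \pi_A$ would otherwise be a CC counterexample to rigidity with $\pi_A(I_4) = 0$.

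The main obstacle, and the bulk of the technical work, is identifying the specific test matrix whose inequality rules out all $c \neq I_4$; the $1$-level constraints alone are insufficient, because $A^1$ is not the Banach-space direct sum $A \oplus^\infty \mathbb{C}$ either. A promising route is to find $X \in M_n(A^1)$ of the form $\Lambda \otimes I_4 + Y$ with $\Lambda \in M_n(\mathbb{C})$ and $Y \in M_n(A)$, violating $\|Y\|_{M_{4n}} \leq \|\Lambda \otimes I_4 + Y\|_{M_{4n}}$. Candidates involve anticommuting combinations like $Y = E_1 \otimes U + E_2 \otimes V$ with $\Lambda = -I_n$, since the cross terms $UV^* + VU^* = -(e_{23} + e_{32})$ produce off-diagonal interference between the $U$- and $V$-blocks that should selectively inflate $\|Y\|$ past $\|Y - I_n \otimes I_4\|$ for carefully chosen $E_i$.
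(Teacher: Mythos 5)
Your choice of example and your treatment of non-essentiality are exactly right and match the paper: $A$ is the algebra of Example \ref{Ex1}, and $A^1$ cannot be essential over $A$ because $A$ is reversible but noncommutative (Lemma \ref{ess} plus Corollary \ref{essc}; the paper also notes $\dim I(A^1) = \dim M_4 \neq \dim I(A)$). The rigidity half, however, has a genuine gap, in fact two. First, your reduction is logically insufficient: showing that the particular projection $\pi_A$ (the case $c = u(I_4) = 0$) is not completely contractive does not rule out a completely contractive $u \neq \mathrm{id}$ with some other value of $c = \lambda I_4 + a$. The correct reduction (the one the paper uses) is Hamana's idempotent trick: from any completely contractive $u$ fixing $A$ one passes to a limit of powers (or Ces\`aro means) to get a completely contractive \emph{idempotent} fixing $A$, whose range, if $u \neq \mathrm{id}$, must by dimension count be $A$; but its value at $I_4$ is then an \emph{arbitrary} element $a_0 \in A$, not $0$, so every such $a_0$ must be excluded. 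Second, even granting a correct reduction, you never actually produce the decisive inequality: you state yourself that ``identifying the specific test matrix'' is the main obstacle and offer only candidate families. As it stands the proposal establishes noncommutativity, symmetry and non-essentiality, but not rigidity, which is the heart of the theorem.

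For comparison, the paper avoids all norm computations at this point. Having reduced to a completely contractive projection $u$ of $A^1$ onto $A$, it extends $u$ to a map $P : M_4 \to B := I(A)$ (the copy of $M_3$ in the upper-right corner, computed in Example \ref{Ex1}), uses rigidity of $I(A)$ over $A$ to see $P|_B = I_B$, so $P$ is a completely contractive projection of $M_4$ onto the subTRO $B$. Youngson's theorem \cite[Theorem 4.4.9]{BLM} together with \cite[Proposition 4.4.12]{BLM} then forces $a\,P(1)^* = a$ for all $a \in A$, in particular $U P(1)^* = U$ with $P(1) \in \mathrm{Span}\{U,V,UV\}$; a two-line computation with $UU^* = e_{11}+e_{22}$, $UV^* = -e_{23}$, $UV^*U^* = -e_{21}$ versus $U = e_{13}+e_{24}$ shows this is impossible. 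If you want to salvage your more computational route, you would need (i) the idempotent reduction above, and (ii) a proof that for \emph{every} $a_0 \in A$ the map $\lambda I_4 + a \mapsto a + \lambda a_0$ fails to be completely contractive; the paper's Youngson argument is essentially a clean structural substitute for exactly that family of estimates.
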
 

  \begin{proof}  
  As the reader may suspect, $A$ is our main Example \ref{Ex1} in Section \ref{kegs}, a subalgebra of $M_4$.  Claim: $A^1$ is a rigid but not essential operator space extension of $A$.  By Corollary \ref{essc}  $A^1$ cannot be an essential extension since $A$ is noncommutative.
  Another way to see this is that $I(A^1) = M_4$ and $B = I(A)$ have different dimensions, in contrast to a fact in the proof of Lemma \ref{ess}. We saw in Section \ref{kegs} that $B$ is the copy of $M_3$ in the top right of $M_4$.
  
To see that $A^1$ is a rigid operator space extension of $A$ let $u : A^1 \to A^1$ be completely contractive with $u_{|A} = I_A$.
By Hamana's theory in this finite dimensional case, by taking limits of powers of $u$ we may assume that $u$ is a completely contractive projection.   Thus it clearly suffices to show that  $u(A^1) \neq A$.
By way of contradiction suppose that $u$ is a projection onto $A$. 
Extend $u$ to $P : M_4 \to B$.   Then $P  \circ i_B = I_B$ since $B = I(A)$ and $(P  \circ i_B)_{|A} = I_A$.
Thus $P$ is a completely contractive projection.  By Youngson's theorem \cite[Theorem 4.4.9]{BLM}, and  by  the fact that $B$ is a subTRO of $M_4$, we have that 
$$a P(1)^* b = P(a P(1)^* b) = ab,  \; \; \; P(1) a^* b = P(P(1) a^* b) = P(a^*b),$$ and so on.    Thus by 
\cite[Proposition 4.4.12]{BLM}   we have
$a P(1)^* = a z^*$, and in particular $U P(1)^* = U z^*$, where $z = 0 \oplus I_2 \oplus 0$
as in Example \ref{Ex1}.  However this is impossible since
$P(1) \in {\rm Span} \{ U,V,UV \}$, and $U U^* = e_{11} + e_{22}, UV^* = -e_{23}, UV^* U^* = - e_{23}(e_{31} + e_{42}) = - e_{21}$ , while 
$U = e_{13} +  e_{24}$. 
\end{proof}

\section{Jordan operator algebras} \label{joa}   We first use 
Example \ref{Ex1} to answer in the negative a question raised in \cite{BNjmn}, above Corollary 2.2 there, namely if the correspondence alluded to there between Jordan operator algebra products on $X$ and certain elements $z \in {\rm Ball}(I(X))$,  is a bijection.   One way to phrase this is: for an operator space $X$, if $z, w \in {\rm Ball}(I(X))$ satisfy 
$a z^* a = a w^* a$ for all $a \in X$, then is $z = w$?     This fails 
for example in Example \ref{Ex1}, if $z, w$ are as in that example, indeed $a^2 = a z^* a = a w^* a = 0$ here.

  \begin{theorem}  If $A$ is a reversible operator algebra with a unique Jordan algebra 
  unitization up to complete isometry, then  $A$ is commutative.      \end{theorem}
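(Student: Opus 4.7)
The plan is to exploit reversibility to produce two operator algebra unitizations of $A$ that share the same Jordan product, then use the uniqueness hypothesis to identify them as operator spaces, and finally invoke the unital case.

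Since $A$ is reversible, both the original product $\cdot$ on $A$ and the reversed product $\bullet$ given by $a \bullet b = ba$ make $A$ an operator algebra on the common operator space. Form their (associative) operator algebra unitizations $A^1_\cdot$ and $A^1_\bullet$; each is an operator algebra, and so also a Jordan operator algebra. One checks directly that both induce the same Jordan product on $A^1 = A \oplus \Cdb 1$: on $A$ this is immediate from $\tfrac{1}{2}(ab+ba) = \tfrac{1}{2}(ba+ab)$, and the extension to $A^1$ is uniquely forced by the requirement that $1$ be the Jordan unit. Thus $A^1_\cdot$ and $A^1_\bullet$ are two Jordan operator algebra unitizations of the same underlying Jordan operator algebra $A$.

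Applying the hypothesis gives a unital completely isometric Jordan isomorphism $A^1_\cdot \to A^1_\bullet$ that fixes $A$; since any such map is determined on $A \oplus \Cdb 1$ by its values on $A$ and on $1$, it is literally the identity map of the set $A^1$. Hence the operator space structures on $A^1$ induced by the two unitizations agree. A routine expansion shows that $(a+\lambda 1)\bullet (b+\mu 1) = (b+\mu 1)\cdot(a+\lambda 1)$, so as operator algebras $(A^1_\cdot)^{\rm rev}$ and $A^1_\bullet$ coincide; combined with the equality of operator space structures this means $A^1_\cdot$ is itself a unital reversible operator algebra. By the unital case of the theorem (proved in \cite{BComm} and recalled at the start of Section \ref{rsy}), $A^1_\cdot$ is commutative, and hence so is $A$.

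The main point of care is the precise interpretation of ``unique Jordan algebra unitization up to complete isometry'': we need it to furnish a completely isometric unital Jordan isomorphism between any two candidate unitizations which is the identity on $A$. With that clarified, everything else reduces to the short formal computation above together with an appeal to the already-established unital case.
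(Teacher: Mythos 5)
Your argument is correct and essentially matches the paper's proof: both extract from the uniqueness hypothesis a unital completely isometric map $A^1 \to (A^{\rm rev})^1$ that is the identity on $A$, and conclude commutativity from the unital theory. The only difference is the last step --- the paper applies the Banach--Stone theorem for unital operator algebras (\cite[Corollary 8.3.13]{BLM}) to that map to see it is a homomorphism, whereas you note the map is literally the identity, so the two unitizations share matrix norms, making $A^1$ a unital reversible operator algebra, and then quote the unital case from \cite{BComm}; these endgames are interchangeable.
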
 

  \begin{proof}  
 If $A$ is a reversible operator algebra with a unique Jordan unitization up to complete isometry, then 
 $A^1 \cong (A^{\rm rev})^1$ unitally completely isometrically and as Jordan algebras, via a unital map $\theta$ 
 which is the identity on $A$.  Hence by the 
 Banach-Stone theorem for unital operator algebras (e.g.\ \cite[Corollary 8.3.13]{BLM}) $\theta$ is a homomorphism.  So 
 $A$ is commutative. 
 \end{proof} 
 
 {\bf Remark.} 1)\ 
 Indeed the operator algebra $F_2$ in \cite[Proposition 2.1]{BWjr} is a 
commutative operator algebra with two Jordan unitizations $F_2 + \Cdb I_6$ and $\Cdb I_4 + E_2$.   The first of these is also an operator algebra unitization.  
It is shown there that these are not Jordan  completely isometrically isomorphic.  Note that the Jordan algebra
$\Cdb I_4 + E_2$ is not 
an operator algebra. These Jordan  algebras are not unitally completely isometric, or else the canonical 
extension $C^*_{\rm e}(F_2 + \Cdb I_6) \to C^*_{\rm e}(E_2 + \Cdb I_4)$ would be a $*$-homomorphism.
This would imply that $\Cdb I_4 + E_2$ is a subalgebra of $C^*_{\rm e}(E_2 + \Cdb I_4)$.

\medskip

 2)\ We may also improve the counterexample given in \cite[Section 2]{BWjr}, a counterexample  to one of the major questions posed at the end of 
 \cite{BWjmn} (about the uniqueness of the  Jordan operator algebra unitization).    Indeed if $A$ is as in Example \ref{Ex1}, for example, and $B = A^{\rm rev}$, then $A=B$ 
completely isometric as Jordan  algebras.  However if the canonical unital extension $A^1 \to B^1$ were 
a completely isometry then $A$ would be commutative as in e.g.\ the last proof.  This is an improvement in that the Jordan product is nontrivial, and nontrivial in a nonartificial way.

\section{Other sufficient conditions}  \label{last}

\subsection{Wedderburn theory of finite dimensional algebras}

  It is instructive that among finite dimensional algebras the semisimple ones  are precisely the $C^*$-algebras, by a well known theorem of Wedderburn.

\begin{corollary} \label{chss}  A       reversible  semisimple operator  algebra is commutative.  For a general  reversible  (resp.\ symmetric) operator  algebra $A$, the Jacobson radical $J(A)$ is reversible  (resp.\ symmetric) and 
     $A/J(A)$ is commutative and semisimple.  
 \end{corollary}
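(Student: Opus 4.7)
The plan is to combine the $3$-commutativity of reversible operator algebras from Theorem \ref{both2}(4) with the fact that, in any Banach algebra, nilpotent ideals lie inside the Jacobson radical, and then invoke Theorem \ref{mainc}.

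First I would treat the semisimple case. Suppose $A$ is reversible and semisimple. Theorem \ref{both2}(4) says any three elements of $A$ commute in all orders, so $abc = bac$ for all $a,b,c \in A$, which gives $[a,b]c = 0$; in particular $[A,A]\cdot A = 0$ and symmetrically $A\cdot[A,A] = 0$. I would then show $A$ is left faithful in the sense of Theorem \ref{mainc}. If $x \in A$ with $xA = 0$, the subspace $I_x = \Cdb x + Ax$ is a two-sided ideal (left multiplication lands in $Ax$, while right multiplication is zero since $xA = 0$), and $I_x^2 \subseteq I_x \cdot A = 0$. For any simple left $A$-module $M$, the submodule $I_x M$ equals $M$ or $0$; if it equalled $M$ then $I_x M = I_x^2 M = 0$, a contradiction, so $I_x M = 0$. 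Hence $I_x$ lies in the intersection of all primitive ideals, namely $J(A) = 0$, forcing $x = 0$. Thus $A$ is left faithful, and Theorem \ref{mainc} delivers commutativity.

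For the general case, $J(A)$ is a closed two-sided ideal of the Banach algebra $A$, and $A/J(A)$ is semisimple (standard Banach algebra facts). By Proposition \ref{classof}, the classes of reversible and of symmetric operator algebras are closed under subalgebras and under quotients by closed ideals, so $J(A)$ inherits reversibility (resp.\ symmetry) as a closed subalgebra of $A$, and $A/J(A)$ inherits it as a quotient. Applying the first part to the reversible (resp.\ symmetric) semisimple algebra $A/J(A)$ yields that $A/J(A)$ is commutative.

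The only slightly delicate point is verifying that $\{x : xA = 0\} \subseteq J(A)$ in the potentially non-unital setting, which is why I isolate the nilpotent ideal $I_x$ above rather than reasoning about $x$ directly via a quasi-inverse. Beyond that, the proof is a straightforward assembly of Theorem \ref{both2}(4), Theorem \ref{mainc}, and Proposition \ref{classof}, so I do not anticipate a substantial obstacle.
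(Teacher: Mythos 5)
Your proof is correct, and your treatment of the general case (closedness of $J(A)$, Proposition \ref{classof} for the ideal and the quotient, semisimplicity of $A/J(A)$, then the semisimple case) is exactly the paper's argument. Where you differ is the semisimple case itself: the paper notes that by 3-commutativity the commutator ideal $[A,A]$ satisfies $[A,A]A = A[A,A] = 0$, hence is a nilpotent ideal, hence is contained in $J(A) = 0$, and commutativity is immediate. You instead show that semisimplicity forces left faithfulness --- any $x$ with $xA = 0$ generates the square-zero ideal $\Cdb x + Ax$, which must lie in $J(A) = 0$ --- and then invoke the left-faithful case of Theorem \ref{mainc}, which in turn runs through Theorem \ref{both2}. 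Both routes rest on the same two pillars (3-commutativity from Theorem \ref{both2}(4) and the fact that nilpotent ideals lie in the Jacobson radical, which you correctly verify module-theoretically in the nonunital setting). Your route is sound but longer than necessary: the observation you make at the outset, $[A,A]\cdot A = 0$, already finishes the semisimple case in one line, since $[A,A]$ is then a square-zero ideal and so sits inside $J(A)=0$; the detour through left faithfulness buys nothing extra here, though it does no harm. Two small points worth fixing: in the real case replace $\Cdb x + Ax$ by $\Rdb x + Ax$; and for the symmetric half of the quotient statement you should explicitly cite Corollary \ref{comm} (symmetric implies reversible) before applying the reversible semisimple case to $A/J(A)$.
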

 
 \begin{proof} In the finite dimensional  case this is obvious from the next result, Theorem \ref{wed}, which also gives more information.
In the general case suppose that $J$ is the commutator ideal of $A$.  This is a nilpotent ideal of $A$ so is contained in 
the Jacobson radical $J(A)$.  If $A$ is semisimple it follows that $J = J(A) = 0$, so that $A$ is commutative.
Otherwise $A/J(A)$ is semisimple by the theory of the Jacobson radical, and so is commutative using Proposition \ref{classof}.  
that result also implies that  $J(A)$ is reversible  (resp.\ symmetric).  \end{proof}

     \begin{theorem} \label{wed}   Let $A$ be  a  finite dimensional 
     reversible  operator  algebra. Then $A$ is the algebra direct sum of two ideals $C$ and $K$ where $C$ is a unital commutative operator  algebra (or $(0)$), and $K$ is a 
  nilpotent ideal and is also reversible.  
  (Of course $CK = KC = (0)$.) If $A$ is a matrix algebra then $K$ may be taken (by choosing an appropriate orthonormal basis) to consist of strictly upper triangular matrices.  \end{theorem}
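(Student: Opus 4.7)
The plan is to combine Wedderburn's principal theorem with the 3-commutativity of reversible algebras from Theorem \ref{both2}(4). Since $A$ is finite dimensional over $\bR$ or $\bC$, the principal theorem yields a vector space decomposition $A = S \oplus J(A)$, where $J(A)$ is the nilpotent Jacobson radical and $S$ is a semisimple subalgebra isomorphic to $A/J(A)$. By Corollary \ref{chss}, $A/J(A)$ is commutative, and so $S$ is a commutative unital subalgebra; write $e = 1_S$ for its identity.

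The crucial step is to promote $e$ from identity of $S$ to a central idempotent of $A$. For any $a \in A$, the three-element product $eae$ is order-independent by Theorem \ref{both2}(4), so
\[ ae = ae^2 = eae = e^2 a = ea. \]
This upgrade is far stronger than the classical Wedderburn conclusion and is the heart of the proof. Setting
\[ C := eA, \qquad K := \{ a \in A : ea = 0 \}, \]
centrality of $e$ makes both $C$ and $K$ two-sided ideals of $A$, and $A = C \oplus K$ internally: if $c \in C \cap K$ then $c = ec = 0$.

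Next I would verify the required properties of the summands. Commutativity of $C$ is a second application of 3-commutativity: for $a,b \in A$ one has $aeb = eab$ (three-factor reordering), whence $(ea)(eb) = e \cdot aeb = e^2 ab = eab$, and symmetrically $(eb)(ea) = eba$; the three-element equality $eab = eba$ then gives $(ea)(eb) = (eb)(ea)$. Thus $C$ is a unital commutative operator algebra with identity $e$, carrying the operator space structure inherited from $A$. For $K$: any $k \in K$ decomposes as $k = s + j$ with $s \in S$ and $j \in J(A)$, and $0 = ek = s + ej$ forces $s = -ej \in S \cap J(A) = 0$, so $k = j \in J(A)$. Thus $K \subseteq J(A)$ is nilpotent, and Proposition \ref{classof} supplies its reversibility as a subalgebra of $A$.

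For the matrix algebra addendum, $K$ is itself a 3-commuting matrix algebra (as a subalgebra of a 3-commuting algebra), so Theorem \ref{Burn} provides a unitary basis change making $K$ upper triangular; since $K$ consists of nilpotent matrices its diagonal entries must vanish, and $K$ becomes strictly upper triangular. I expect the main technical point to be the centrality argument for $e$, which relies essentially on 3-commutativity rather than mere commutativity of $S$, and is precisely what lifts the vector-space Wedderburn decomposition to an ideal decomposition.
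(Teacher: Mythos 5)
Your proposal is correct and follows essentially the same route as the paper: Wedderburn's principal theorem, promotion of the idempotent $e=1_S$ to a central idempotent via 3-commutativity (Theorem \ref{both2}(4)), the resulting ideal decomposition $A = eA \oplus \{a : ea = 0\}$, and Theorem \ref{Burn} plus nilpotence for the strictly upper triangular form of $K$. The only cosmetic difference is that you verify commutativity of $C=eA$ by a direct 3-commutativity computation, where the paper instead notes $C$ is a unital reversible subalgebra and invokes Theorem \ref{mainc}; both are fine.
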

 
\begin{proof}   If $A$ is not radical (and nilpotent) already, let 
      $B$ be the semisimple subalgebra in the Wedderburn principal theorem
      (see e.g.\ p.\ 163-164 in \cite{Row}), and let $J$ be the maximal nilpotent ideal.
      Then $B$ is unital by Wedderburn's other theorem referred to earlier (since it is a finite dimensional  semisimple algebra), and hence commutative by Theorem  \ref{mainc} above.
     Let $f = 1_B$ then $f = f^2$ is in the center of $A$ by 3-commutativity,
     and $C = Af = fAf$ is a unital subalgebra, so is reversible and hence 
     is commutative by Theorem  \ref{mainc}  above.  
      It is also an ideal.  Now $f \notin J$, by Wedderburn's principal theorem.
     If $a = b + j$ for $j \in J, b \in C$ then $a - fa = j - fj \in J$.
     Thus $K = \{ a - f a : a \in A \}$ is an ideal in $J$, and $K$ is nilpotent since $J$ is, and $A = C \oplus K$.  
     
      If $A$ is a reversible matrix algebra then by Theorem \ref{Burn} 
      $K$ may be taken  to consist of upper triangular matrices.  Since it is nilpotent, these are 
  strictly upper triangular. 
 \end{proof}
 
  This reduces any classification of finite dimensional 
     reversible  operator  algebras to the nilpotent case.

To illustrate how the last theorem may be used, consider the problem of checking that all reversible subalgebras of $M_3$ are commutative.   
Theorem \ref{wed} reduces this to the case of reversible strictly upper triangular subalgebras, and these are easily classified.  They are all one or two dimensional, except for the algebra of all strictly upper triangular matrices which we said earlier is not reversible.

\subsection{The commutator ideal and Lie ideals} When considering  specific examples of reversible operator algebras we have found that often proving some facts about the commutator ideal $[A,A]$ is the key to understanding whether $A$ is commutative.  
In Example \ref{Ex1} we have the commutator ideal consists of scalar multiples of $e_{14}$, for example.  We already used  the commutator ideal  in Section \ref{rsy}, in our use of `c-faithful'. By Theorem 
\ref{both2} (4) we know that $A$ is 3-commutative, so that
$[A,A]$ is contained in the left and right annihilator ideals of $A$. 
It seems that using some Lie ideal theory such as may be found in \cite{Thiel} and the references therein, may be helpful in obtaining other useful characterizations of commutative operator algebras.

If $A$ is 3-commutative then $I = \overline{A^2}$ is an ideal, is commutative, and $Q = A/I$ is commutative, indeed 
    square-zero nilpotent (i.e.\ has trivial product).   Conversely, given a commutative (resp.\ commutative and symmetric) operator algebra,
    and a  operator space  (resp.\  symmetric) $Q$ with trivial product, then $I \oplus Q$ is a 3-commutative (resp.\ commutative and symmetric) operator algebra. 
Proceeding along these lines one may prove for example that if $I= \overline{A^2}$ acts faithfully on $A$  then $A$ is commutative.  This also follows by 
noting that $I$ contains the commutator ideal 
  $[A,A]$, so that $A$ is c-faithful in the sense defined after 
Theorem \ref{mainc}.  Hence $A$ is commutative by the discussion in that place.

\subsection{Ternary matrix algebras}

The following may be viewed as a generalization of Corollary \ref{essc}.

\begin{corollary}  \label{parti}  Let  $A$ be an reversible operator algebra.  If $A$ is a subalgebra of a $C^*$-algebra $B$ and if $A$ generates $B$ as a TRO 
(that is the span of products of the form $a_1 a_2^* a_3 a_4^* \cdots a_n$ are dense in $B$)
then $A$ is commutative.
 \end{corollary}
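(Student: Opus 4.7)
The plan is to reduce to Theorem \ref{mainc} by showing that $A$ is left faithful; combined with the hypothesis that $A$ is reversible, this will force commutativity. The key observation is that the TRO-generation hypothesis lets us propagate left-annihilation from $A$ to all of $B$.

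Concretely, suppose $a \in A$ satisfies $aA = 0$, so that $ax = 0$ in $B$ for every $x \in A$. For any generating TRO product $a_1 a_2^* a_3 a_4^* \cdots a_n$ with $a_i \in A$, associativity in $B$ gives
\[
a \cdot (a_1 a_2^* a_3 \cdots a_n) \;=\; (a a_1)(a_2^* a_3 \cdots a_n) \;=\; 0,
\]
since $a a_1 = 0$. By linearity and the density hypothesis, $aB = 0$. Since $B$ is a $C^*$-algebra and $a \in A \subseteq B$, we have $a^* \in B$, so $a a^* \in aB = \{0\}$, whence $\|a\|^2 = \|a a^*\| = 0$ and $a = 0$. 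Thus $A$ is left faithful. An entirely analogous argument on the right (using that TRO generators end in an unadorned $a_n$, so right multiplication by $a$ with $Aa=0$ kills the tail) shows that $A$ is also right faithful, though either side suffices.

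With $A$ reversible and left faithful, Theorem \ref{mainc} yields that $A$ is commutative. I do not anticipate any real obstacle: the proof is essentially a two-line translation between ``$A$ generates $B$ as a TRO'' and the faithfulness hypothesis of Theorem \ref{mainc}, followed by invocation of that theorem. The only subtle point to keep in mind is the orientation of the TRO generators (they end in an unadorned $a_n$ rather than $a_n^*$), which is what ensures that left multiplication by $a$ connects cleanly to $aa_1$ at the front of each generator.
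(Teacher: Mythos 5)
Your proof is correct, but it takes a genuinely different and more elementary route than the paper. You verify the hypothesis of Theorem \ref{mainc} directly: if $aA=0$ then, since each generator $a_1a_2^*a_3\cdots a_n$ begins with an unstarred $a_1\in A$, associativity gives $a(a_1a_2^*\cdots a_n)=(aa_1)(a_2^*\cdots a_n)=0$, so by density $aB=0$, and the $C^*$-identity ($aa^*\in aB=\{0\}$, so $\|a\|^2=\|aa^*\|=0$) forces $a=0$; thus $A$ is left (and symmetrically right) faithful, and reversibility plus Theorem \ref{mainc} gives commutativity. The paper instead first passes to a TRO-ideal quotient $B/J$ which is a ternary envelope of $A$, observes that then $I(A)=I(B)$ is a unital $C^*$-algebra containing $A$ as a subalgebra, and invokes Corollary \ref{essc} for essential operator algebras --- whose own proof identifies the Kaneda--Paulsen element as $z=1_D$ and uses the cancellation result \cite[Proposition 4.4.12]{BLM} to get faithfulness before citing Theorem \ref{mainc}. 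So both arguments ultimately run through Theorem \ref{mainc}, but yours replaces the ternary/injective envelope machinery with a two-line $C^*$-algebra computation, which is shorter and arguably cleaner; the paper's route has the side benefits of exhibiting the statement as a genuine extension of the essential-algebra result of Section \ref{sess} and of introducing the quotient-to-ternary-envelope technique that is reused in the proof of Corollary \ref{recta}. One could note that your lemma even shows the stronger fact that any operator algebra sitting as a subalgebra inside a $C^*$-algebra it TRO-generates is automatically faithful, which is consistent with Example \ref{Ex1} (there the generated TRO $pM_4q$ is not $*$-closed, and that algebra is indeed not faithful).
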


\begin{proof} Suppose that we have a  $C^*$-algebra $B$ with subalgebra $A$ generating $B$ as a TRO.  Then 
there is an ideal $J$ of $B$ such that the $C^*$-algebra $B/J$ is a ternary envelope of $A/J \cong A$.   Moreover 
$A/J$ is a subalgebra of the $C^*$-algebra  $B/J$, and has the same properties as $A$.   Thus we may replace
$B/J$ by $B$ and $A/J$ by $A$, that is we may assume that $B =  {\mathcal T}(A)$.   Then $I(A) = I({\mathcal T}(A)) = I(B)$ is a unital 
$C^*$-algebra  containing $B$ and $A$ as subalgebras.   Thus  $A$ is commutative by Corollary \ref{essc}.  
 \end{proof}
 
While the last result is interesting, one might ask for a `TRO version', where $B$ or $I(A)$ in the result is a TRO as 
opposed to a $C^*$-algebra.   
 Every  finite dimensional complex TRO is (completely isometrically 
and ternary isomorphic to) 
an `$L^\infty$-direct sum' $\oplus_{k = 1}^r \, M_{n_k,m_k}$.   This suggests the next definition and result.  Note that  even if $n \neq m$ we may 
regard $M_{n,m}$ as an algebra by embedding as the `top left corner' in $M_k$ for $k$ large enough. 
That is, for $x, y \in M_{n,m}$ we have $x \cdot y = [\sum_{k = 1}^t \, x_{ik} y_{kj}] \in M_{n,m}$ where $t = \min \{ n,m \}$.
We define a ternary matrix algebra to be an `$L^\infty$-direct sum' $\oplus_{k = 1}^r \, M_{n_k,m_k}$ of such algebras.   It is 
also of course a TRO as well as being an operator algebra with the product $(x_k) (y_k) = (x_k \cdot y_k)$, where $\cdot$ is as just defined. 

\begin{corollary}  \label{recta}  Suppose that $A$ is a subalgebra of a ternary  matrix algebra $B$ such that $A$ generates $B$ as a TRO
(that is the span of products of the form $a_1 a_2^* a_3 a_4^* \cdots a_n$ are dense in $B$).   
If $A$ is  
reversible  then 
 $A$ is commutative.
 \end{corollary}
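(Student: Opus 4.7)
The plan is to reduce to the case of a single block $B = M_{n,m}$, and then invoke Theorem \ref{both2}(2) after identifying the Kaneda--Paulsen element as an explicit partial identity. For the reduction, write $B = \oplus_{k=1}^r M_{n_k,m_k}$ and note that each block projection $\pi_k : B \to M_{n_k,m_k}$ is simultaneously a ternary morphism and an algebra homomorphism (since the product on $B$ is the $\oplus^\infty$ one). Set $A_k = \pi_k(A)$: this is a subalgebra of $M_{n_k,m_k}$; it is reversible by Proposition \ref{classof} as a quotient of $A$ by a closed ideal; and it TRO-generates $M_{n_k,m_k}$ because $\pi_k$ is ternary and $A$ TRO-generates $B$. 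The diagonal map $a \mapsto (\pi_k(a))_k$ embeds $A$ as a subalgebra of $\oplus_k A_k$, so it suffices to prove commutativity of each $A_k$.

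Assume now that $B = M_{n,m}$ and, without loss of generality, $n \geq m$ (the case $n \leq m$ is symmetric and is treated below). Embed $M_{n,m} \hookrightarrow M_n$ by padding with $n-m$ zero columns on the right. A direct check shows this realises $M_{n,m}$ both as a subTRO of $M_n$ (products $xy^*z$ land in $M_{n,m}$, since $z$ has zero columns beyond index $m$) and as a subalgebra (the $M_n$-product of two elements of $M_{n,m}$ recovers the $\min(n,m)$-truncated product defined in the paper). Because $M_{n,m}$ is injective, $I(A) = I(\mathcal{T}(A)) = M_{n,m}$, so this embedding puts $A$ in the standard position of Section \ref{sp}. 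Theorem \ref{both2}(1) then yields a unique $z \in {\rm Ball}(M_{n,m})$ with $ab = az^*b$ (products computed in $M_n$) for all $a,b \in A$.

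I claim $z$ is the element $e \in M_{n,m}$ whose top $m \times m$ block is $I_m$ and whose bottom $(n-m) \times m$ block is zero. Viewed inside $M_n$, $e$ is the self-adjoint rank-$m$ projection with top-left $m \times m$ block equal to $I_m$ and all other blocks zero, and the $\min(n,m)$-truncated product formula shows that $e$ is a right identity for the algebra $M_{n,m}$. Hence $ae^*b = aeb = ab$ for all $a,b \in M_{n,m}$, and the uniqueness clause of Theorem \ref{KP} forces $z = e$. Consequently $az^* = ae = a$ for every $a \in A$, so $A \subseteq Az^*$; by Theorem \ref{both2}(2), $Az^*$ is a commutative operator algebra, which yields the commutativity of $A$. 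The case $n < m$ is handled symmetrically by embedding $M_{n,m} \hookrightarrow M_m$ as the first $n$ rows: the corresponding $z$ satisfies $z^*a = a$, so $A \subseteq z^*A$, and commutativity of $z^*A$ from Theorem \ref{both2}(2) finishes the proof. The square case $n = m$ is subsumed, with $z = I_n$, and is also a direct instance of Corollary \ref{parti}.

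The main technical point is the identification of $z$ with a one-sided identity of the block $M_{n,m}$. This requires choosing the embedding $M_{n,m} \hookrightarrow M_{\max(n,m)}$ so that it is simultaneously a TRO embedding and an algebra embedding, ensuring that the standard position hypotheses of Section \ref{sp} are met, and then leveraging uniqueness in Theorem \ref{KP} to pin down the precise candidate. Once this step is in hand, Theorem \ref{both2}(2) does all the work of proving commutativity.
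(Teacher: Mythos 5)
Your single-block analysis is essentially the paper's own argument and is fine: since $A$ TRO-generates $M_{n,m}$ and $M_{n,m}$ has no nontrivial TRO ideals, ${\mathcal T}(A)=M_{n,m}$, which is injective, so $I(A)=M_{n,m}$; the Kaneda--Paulsen element is the partial identity (the paper's $p_kq_k$, your $e$), it is a one-sided identity for $A$, and commutativity of $Az^*$ (or $z^*A$) from Theorem \ref{both2}(2) finishes. (Do say ``simple as a TRO'' rather than ``injective'' when identifying ${\mathcal T}(A)$ with $M_{n,m}$; injectivity alone does not rule out a proper ternary envelope quotient.)

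The genuine gap is in your reduction to one block, at the claim that $A_k=\pi_k(A)$ is reversible ``by Proposition \ref{classof} as a quotient of $A$ by a closed ideal.'' Proposition \ref{classof} gives reversibility of the abstract quotient $A/\ker(\pi_k|_A)$ with its \emph{quotient} matrix norms, whereas everything you do afterwards (identifying $I(A_k)=M_{n_k,m_k}$, producing a $w_k$ implementing the reversed product) uses $A_k$ with the matrix norms inherited from $M_{n_k,m_k}$. The canonical map $A/\ker(\pi_k|_A)\to A_k$ is a completely contractive algebra isomorphism but in general not a complete isometry: for instance if $A$ sits ``diagonally'' across two blocks, $\ker(\pi_k|_A)$ can be $(0)$ while the block norm of $\pi_k(a)$ is strictly smaller than $\|a\|$. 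Since reversibility is a property of the matrix-normed algebra, it does not transfer along such a map, so this step is unjustified. The paper's repair is direct: first replace $B$ by ${\mathcal T}(A)$ (discard the redundant blocks; $A$ still embeds completely isometrically as a subalgebra of the remaining blocks, and it suffices to prove that copy commutative), then use reversibility of $A$ to get $w\in{\rm Ball}(I(A))={\rm Ball}({\mathcal T}(A))$ with $ba=aw^*b$, and compress: $(p_kbq_k)(p_kaq_k)=p_k(ba)q_k=(p_kaq_k)(p_kwq_k)^*(p_kbq_k)$, so Theorem \ref{BRSKP} shows each block compression is reversible. Note also that without this passage to ${\mathcal T}(A)$ your scheme would require reversibility of the compressions to \emph{all} blocks, including ones not seen by the ternary envelope, where no such $w_k$ is available; with the repair in place your argument coincides with the proof in the paper.
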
 

\begin{proof}  Suppose that 
$B = \oplus_{k = 1}^r \, M_{n_k,m_k}$ 
and that $p_k, q_k$ are the 
the left and right support projections of
$M_{n_k,m_k}$ here.  Clearly $A_k = p_k A q_k$  is a  subalgebra of $M_{n_k,m_k}$.  
There is a TRO  ideal $J$ of $B = \oplus_{k = 1}^r \, M_{n_k,m_k}$ such that $B/J$ is a ternary envelope of $A/J \cong A$. 
Indeed this corresponds to removing some of the rectangular blocks $M_{n_k,m_k}$.   We may assume that the removed blocks are `at the end', so that $Z = {\mathcal T}(A) = \oplus_{k = 1}^s \, M_{n_k,m_k}$ for $s \leq r$.  Note that $A$ is still a subalgebra of the latter.  
Let $p_k \in Z Z^*$ be the projection corresponding to $I_{n_k}$  (so the left support projection of the summand $M_{n_k,m_k}$).
Similarly let $q_k$ be the projection corresponding to $I_{m_k}$.
 The TRO  generated by $p_k A q_k$ 
 inside $B$ equals 
 $M_{n_k,m_k}$.  Indeed  certainly this generated  TRO is contained in $M_{n_k,m_k}$.   Conversely recall that 
 products $a_1 a_2^* \cdots a_n$ are dense in $B$ for $a_k \in A$, so that products $$p_k a_1 a_2^* \cdots a_n q_k = (p_k a_1 q_k) (q_k a_2^*p_k)  \cdots (p_k a_n q_k)$$ 
 are dense in $M_{n_k,m_k}$.  Now $I(A_k) = {\mathcal T}(A_k)$ is a quotient TRO of $M_{n_k,m_k}$ by a TRO ideal $J$. 
   In fact this TRO ideal is $(0)$, since $M_{n_k,m_k}$ has no nontrivial TRO ideals. 
   Thus $I(A_k) = M_{n_k,m_k}$ completely isometrically. 

 Suppose that $z , w \in {\rm Ball}(Z)$ with  $ab = az^*b = b w^* a$ for $a, b \in A$.   Then 
   $$p_k a b q_k = p_k a q_k p_k b q_k =  p_k a q_k (p_k q_k)^*  p_k a q_k.$$   It follows that 
  the element $z$ from Theorem \ref{KP} corresponding to the product in $A_k$ is $z_k = p_k q_k$.   The meaning of the last product can be interpreted as a product in $M_\ell$ where $\ell = \max\{ n_k, m_k \}$.   Indeed $p_k$ and $q_k$ are simply identity matrices of different sizes,
  so their product is the smaller of the two, thought of as inside $M_{n_k,m_k}$ (either as $[ I : 0]$ or its transpose).
    
  Claim:  $A_k$ is reversible.      Indeed 
$$(p_k b q_k )(p_k a q_k) = p_k b  a q_k = p_k a w^* b q_k = p_k a q_k w^* p_k b q_k 
= (p_k a q_k ) (p_k w q_k)^*(p_k b q_k),$$
for $a, b \in A.$
Since this equals $(p_k a q_k ) (p_k w q_k)^*(p_k a q_k)$ it follows from Theorem \ref{BRSKP}  that $A_k$ is reversible.   

   It is easy to see for each $k$ that 
  either $a = a z_k^*$ for all $a \in A_k$ or $a = z^*_k a$ for all $a \in A_k$.   Since $z^*_k A_k$ and $A_k z^*_k$ are commutative
  it follows that $A_k$ is commutative.  
  Then 
 $$a_1 a_2 = (\sum_{k=1}^r \, p_k) a_1 a_2  (\sum_{k=1}^r \, q_k) = \sum_{k=1}^r \, (p_k a_1 q_k )(p_k a_2  q_k), \qquad a_1, a_2 \in A.$$
By the last line we may reverse the parentheses and argue backwards to obtain $a_2 a_1$.   So $A$ is commutative. 
\end{proof} 

{\bf Remark.} Example \ref{Ex1} shows that some such conditions as those in the last results seem to be needed.  Note that one may also use the {\em lower right} corner in $M_k$ for $k$ large enough (as opposed to, or in addition to,
 `top left') in the definition of `ternary matrix algebra', and Corollary  \ref{recta} will still be valid.
 However this form  is equivalent (by conjugating by a unitary). 
 
\bigskip
 
 {\em Closing remark.}  The classical program of classifying 
 (real or complex) algebras foundered in the 19th century already.   There is no very useful complete 
classification  even of commutative finite dimensional algebras, or equivalently of 
commutative  subalgebras of matrix algebras. 
This is in part connected with the dramatic issues discovered by Gelfand and Ponomarev
\cite{GP}, and expanded on by very many other mathematicians since then, which show that there is no hope 
for classifying certain matrix structures (such problems are called `wild').   
For example the problem of classifying pairs of matrices up to simultaneous similarity contains the problem of classifying $k$-tuples of matrices up to simultaneous similarity for an arbitrary $k$, and it thus contains the problem of classifying representations of an arbitrary $k$ dimensional algebras.  At this point it is not completely clear, but it seems very likely,  that further progress on the classification of reversible finite dimensional operator algebras will be impacted in places by these difficulties.

 \subsection*{Acknowledgements}   
 We acknowledge support from NSF Grant DMS-2154903.  We thank Tao Mei for several conversations and helpful questions, and we thank David Sherman, Eusebio Gardella, Caleb McClure, Mehrdad Kalantar, and Bill Johnson  for other discussions, and the referee for their comments.  We also thank Caleb McClure for  numerically checking the dimensions of some generated $C^*$-algebras of some finite dimensional examples, and finding some typos.    We thank Deguang Han for the invitation (which came as we were writing this paper) to be included in this issue honoring Professor David Larson, and to him and Liming Ge for several editorial exchanges. We recently enjoyed a couple of excellent dinners and conversations on these topics with David Larson, and we thank Tao Mei for hosting these.  We also thank Tao  for his great hospitality, hosting the author for several days at Baylor University where this project started to get off the ground.      Indeed this project initially took shape at that time: the author suggested to Tao the classification of very low dimensional reversible operator algebras, as a great REU project for a student.

\end{document}